\DeclareMathOperator{\ad}{ad}
\DeclareMathOperator{\Id}{id}
\DeclareMathOperator{\Der}{Der}
\newcommand{\F}{\mathbb F}
\newcommand{\Q}{\mathbb Q}
\newcommand{\LL}{\mathcal L}
\newcommand{\Z}[0]{\mathbb Z}
\newtheorem{dummy}{Dummy}
\newtheorem{theorem}[dummy]{Theorem}
\newtheorem{prop}[dummy]{Proposition}
\theoremstyle{definition}
\newtheorem{example}[dummy]{Example}
\theoremstyle{remark}
\begin{document}

\title[Grading switching for modular non-associative algebras]{Gradings switching for modular non-associative algebras}

\author{Marina Avitabile}
\email{marina.avitabile@unimib.it}
\address{Dipartimento di Matematica e Applicazioni\\
  Universit\`a degli Studi di Milano - Bicocca\\
 via Cozzi 53\\
  I-20125 Milano\\
  Italy}

\author{Sandro Mattarei}
\email{mattarei@science.unitn.it}
\address{Dipartimento di Matematica\\
  Universit\`a degli Studi di Trento\\
  via Sommarive 14\\
  I-38050 Povo (Trento)\\
  Italy}
\subjclass[2000]{Primary 17A36; secondary  33C52, 17B50,  17B65}
%    The 2010 edition of the Mathematics Subject Classification is
%    now available.  If you are citing a classification from the
%    new scheme, use the following input coding instead.
%\subjclass[2010]{Primary }
\keywords{Non-associative algebra; grading; derivation; Artin-Hasse exponential;
Laguerre polynomial; restricted Lie algebra; toral switching}
\date{}

\begin{abstract}
We describe a {\em grading switching} for arbitrary non-associative
algebras of prime characteristic $p$, aimed at producing a new grading of an algebra from a given one.
This is inspired by a fundamental tool in the classification theory of modular Lie algebras known as {\em toral
switching,} which relies on a delicate adaptation of the exponential of a derivation.
We trace the development of grading switching, from an early version based on
taking the Artin-Hasse exponential of a nilpotent derivation, to a more general version
which uses certain generalized Laguerre polynomials playing the role of generalized exponentials.
Both versions depend on the existence of appropriate analogues
of the functional equation $e^x\cdot e^y=e^{x+y}$ for the classical exponential.
\end{abstract}

\maketitle

\section{Introduction}

The exponential function plays a role in various branches of mathematics.
The main reason, sometimes disguised in other forms, such as its differential
formulation
$(d/dx)e^x=e^x$, is that it
interconnects additive and multiplicative structures,
because of the fundamental identity $e^x\cdot e^y=e^{x+y}$.
In particular, one of the important classical applications is the local
reconstruction of a Lie group from its Lie algebra.
This Lie-theoretic use of the exponential function can be
formulated in more general terms as a device which turns derivations
of a {\em non-associative} (in the standard meaning of {\em not necessarily associative}) algebra into automorphisms.
The basic algebraic fact is already visible in the special case of nilpotent derivations,
where convergence matters play no role:
if $D$ is a nilpotent derivation of a non-associative algebra $A$ over a field of characteristic zero,
then the finite sum $\exp(D)=\sum_{i=0}^{\infty}D^i/i!$ defines an
automorphism of $A$.

This very useful property breaks down over fields of positive
characteristic $p$.
The condition $D^p=0$, which seems the minimum requirement for $\exp(D)$ to make sense in
this context, does not guarantee that $\exp(D)$ is an automorphism.
In fact, only the stronger assumption $D^{(p+1)/2}=0$ does, for
$p$ odd.
In the absence of the assumption $D^p=0$ one can use the {\em truncated exponential}
$E(X)=\sum_{i=0}^{p-1}X^i/i!$ as some kind of substitute for the
exponential series, of course dropping any expectation
that evaluating it on $D$ may yield an automorphism.

In the theory of modular Lie algebras the apparent shortcoming of $\exp(D)$
not necessarily being an automorphism when it is defined is
turned into an advantage with the technique of {\em toral
switching}.
This is a fundamental tool originally due to Winter~\cite{Win:toral},
which has undergone substantial generalizations by Block and Wilson~\cite{BlWil:rank-two}, and finally Premet~\cite{Premet:Cartan},
where maps similar to exponentials of derivations are used to
produce a new torus from a given one.
The very fact that the map need not be an automorphism
allows the new torus to have rather different
properties than the original one, which are more suited to classification purposes.

The function of tori in modular Lie algebras
is to produce gradings, as the corresponding eigenspace
decompositions with respect to the adjoint action (a {\em (generalized) root space decomposition}).
One naturally wonders whether some kind of exponential could be used
to pass from a grading to another without reference to the grading
arising as the root space decomposition with respect to some torus.
Our goal in this paper is to describe such a {\em grading switching}.
Besides effectively extending the applicability of the technique
from the realm of Lie algebras to the wider one of non-associative algebras,
it has applications within Lie algebra theory, where not all gradings of interest
are directly related to tori.
In particular, root space decompositions are gradings over abelian groups of exponent $p$,
while grading switching avoids this restriction.

Generally speaking, grading switching applies when
a graded algebra $A$ in characteristic $p$ has a homogeneous derivation $D$ of nonzero degree,
but such that $p$ times the degree of $D$ equals zero in the grading group.
For our scope a grading of $A$ is a direct sum decomposition
$A=\bigoplus_{g\in G} A_g$, where $G$ is an abelian group and
$A_gA_h\subseteq A_{g+h}$ holds (writing the group operation additively).

The simplest instance of grading switching occurs if the derivation satisfies $D^p=0$.
Then one easily finds that the truncated exponential $E(D)$ maps the given grading into another grading,
in the sense that $A=\bigoplus_{g\in G} E(D)A_g$,
as stated in Theorem~\ref{thm:truncated-grading}.

The assumption $D^p=0$ is very strong, and trying to relax it one finds that
the natural substitute for the exponential of a derivation in prime characteristic
appears to be the {\em Artin-Hasse exponential} $E_p(D)$
rather than the truncated exponential.
In fact,
it was shown in~\cite{Mat:Artin-Hasse} that in the above setting
$A=\bigoplus_{g\in G} E_p(D)A_g$
remains true with the condition $D^p=0$ weakened to the assumption that $D$ is nilpotent,
see our Theorem~\ref{thm:AH-grading}.

To attain a grading switching in full generality, that is, for arbitrary derivations $D$,
one needs a new substitute for the
exponential series, given by certain (generalized) {\em Laguerre polynomials}.
This was developed in~\cite{AviMat:Laguerre}, and may be thought as encompassing
some aspects of toral switching as a special case, when the grading under consideration
arises as a root space decomposition.
In fact, much inspiration for this work came from the very clear
exposition in Strade's book~\cite[Section~1.5]{Strade:book} of the most general version of toral switching, due to Premet~\cite{Premet:Cartan}.
In the general setting of grading switching under consideration,
and under very mild assumptions on $D$ (none in the finite-dimensional case
over an algebraically closed field)
those Laguerre polynomials allow one to construct a linear map
$\LL_D:A\to A$ such that $A=\bigoplus_{g\in G} \LL_D(A_k)$.
The map $\LL_D$ coincides with $E_p(D)$ on the Fitting null component of $D$, on which the latter makes sense.
We refer the reader to Section~\ref{sec:laguerre}
and Theorem~\ref{thm:Laguerre-grading} for details.

The development of the grading switching described here was motivated by applications
in the theory of {\em thin Lie algebras}.
The earlier version of grading switching based on Artin-Hasse exponentials was sufficient
for~\cite{AviMat:-1},
while an application in~\cite{AviMat:mixed_types} to a non-nilpotent derivation
has required the more general version based on Laguerre polynomials.

In Section~\ref{sec:applications} we present an application of grading switching
to the construction of certain new gradings of Zassenhaus and Albert-Zassenhaus algebras starting from natural ones.
Aside from special cases which we describe there,
those grading switchings do not arise as toral switchings.

\section{Artin-Hasse exponentials of derivations}\label{sec:artin-hasse}%______________________________________________

It is well known that if $D$ is a nilpotent derivation of
a non-associative algebra $A$ over a field of
characteristic $0$, then $\exp(D)$ is an automorphism of $A$.
Here the exponential is defined by the ordinary exponential series $\exp(X)=\sum_{i=0}^{\infty}X^i/i!$,
and the assumption that $D$ is nilpotent is a convenient way of making sure that the series can be evaluated on it,
but can be weakened to suitable convergence assumptions.
The fact that $\exp(D)$ is an automorphism follows by simple calculation,
but in view of the variations and generalizations to follow it is best
deduced from the basic functional equation
$\exp(X)\cdot\exp(Y)=\exp(X+Y)$
of the exponential series, by means of the following {\em tensor product device}.
If $m: A\otimes A \rightarrow A$ denotes the map given
by the multiplication in $A$,
the fact that $D$ is a derivation means that
$D(m(x \otimes y))=m(Dx \otimes y)+m(x \otimes Dy)$ for any $x,y \in A$.
This property can be more concisely written as
$D\circ m=m \circ(D\otimes\Id+\Id\otimes D)$, where $\Id:A \rightarrow A$ is the
identity map.
Evaluating the functional equation of the exponential series on the commuting linear operators
$X=D\otimes\Id$ and $Y=\Id\otimes D$ now proves that $\exp(D)$ is an automorphism of $A$.

Moving on to the modular case, assume that
$A$ is a non-associative algebra over a field of prime
characteristic $p$, and $D$ is derivation of $A$.
Here the exponential series does not even make sense,
because the denominators vanish except for the first $p$ terms of the series.
One way to still make sense of the exponential, which has played an important role in the theory of modular
Lie algebras, is assuming that the derivation satisfies $D^p=0$, so that
$\exp(D)$ may be interpreted as $\sum_{i=0}^{p-1} D^i/i!$.
This interpretation is not without danger, because $\exp(D)$ need not be an automorphism if so interpreted.
To avoid ambiguities one better defines a {\em truncated exponential}
$E(D)=\sum_{i=0}^{p-1} D^i/i!$, which of course can be evaluated on an arbitrary derivation $D$.
Direct computation (possibly aided by the tensor product device if one so wishes) shows that
\begin{equation}\label{eq:obstruction}
E(D)x\cdot E(D)y-E(D)(xy)=
\sum_{k=p}^{2p-2}\sum_{i=k+1-p}^{p-1}
  \frac{(D^i x)(D^{k-i} y)}{i!(k-i)!}
\end{equation}
for $x,y\in A$.
If $p$ is odd and $D^{(p+1)/2}=0$, then at least one of the factors at the numerator of each summand
vanishes, and so $E(D)$ is an automorphism in this case.
It is well known that this need not be so in general, even if $D^p=0$, see~\cite[Section~5]{Mat:Artin-Hasse}, for example.
Indeed, the obstruction formula~\eqref{eq:obstruction}
gives a measure of the extent to which $E(D)$ fails to be an
automorphism.

Less well known is the following expression for the obstruction~\cite[Lemma~2.1]{Mat:Artin-Hasse}, which holds
under the assumption $D^p=0$:
\begin{equation}\label{eq:my-obstruction}
E(D)x\cdot E(D)y-E(D)(xy)=
E(D)
\sum_{i=1}^{p-1}\frac{(-1)^i}{i}\, D^ix\cdot D^{p-i}y
\end{equation}
for $x,y\in A$.
Because
$i!\,(p-i)!\equiv (-1)^i\, i\pmod{p}$
for $0<i<p$
(an easy consequence of Wilson's theorem
$(p-1)!\equiv -1\pmod{p}$),
the sum
$\sum_{i=1}^{p-1}(-1)^i\,D^ix\cdot D^{p-i}y/i$
which appears in the right-hand side of Equation~\eqref{eq:my-obstruction} can be rewritten as
\[
\sum_{i=1}^{p-1}\frac{D^ix\cdot D^{p-i}y}{i!\,(p-i)!}.
\]
Note that this sum consists of all terms of lowest degree in $D$ (that is, those corresponding to $k=p$)
in Equation~\eqref{eq:obstruction}.
According to the tensor product device introduced earlier,
Equation~\eqref{eq:my-obstruction} follows from the following polynomial congruence
by evaluating it on the commuting linear operators $X=D\otimes\Id$ and $Y=\Id\otimes D$.

\begin{prop}\label{prop:functional-truncated}
The truncated exponential polynomial $E(X)=\sum_{i=0}^{p-1}X^i/i!$
satisfies the congruence
\[
E(X)\cdot E(Y)\equiv
E(X+Y)
\Bigl(1+
\sum_{i=1}^{p-1}(-1)^iX^iY^{p-i}/i
\Bigr)
\pmod{(X^p,Y^p)}
\]
in the polynomial ring $\F_p[X,Y]$.
\end{prop}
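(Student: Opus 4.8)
The plan is to work in the formal power series ring $\F_p[[X,Y]]$ and exploit the classical functional equation $\exp(X)\exp(Y)=\exp(X+Y)$, which holds there since all the denominators $i!$ are units in $\F_p$ until one truncates. Write $\exp(X)=E(X)+R(X)$, where $R(X)=\sum_{i\ge p}X^i/i!$ collects the tail; note $R(X)\equiv 0\pmod{X^p}$, and similarly for $Y$. Substituting into $\exp(X)\exp(Y)=\exp(X+Y)$ and expanding, one gets
\[
E(X)E(Y)=E(X+Y)+R(X+Y)-E(X)R(Y)-R(X)E(Y)-R(X)R(Y).
\]
Working modulo the ideal $(X^p,Y^p)$, the terms $R(X)E(Y)$, $R(X)R(Y)$ are congruent to $0$ since $R(X)\equiv0\pmod{X^p}$; similarly $E(X)R(Y)\equiv R(Y)\pmod{X^p,Y^p}$ is reduced to its part of $Y$-degree $<p$, which is empty — so $E(X)R(Y)\equiv 0$ as well. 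Hence modulo $(X^p,Y^p)$,
\[
E(X)E(Y)\equiv E(X+Y)+R(X+Y).
\]
The crux is therefore to identify $R(X+Y)=\exp(X+Y)-E(X+Y)$ modulo $(X^p,Y^p)$.

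First I would compute $R(X+Y)\bmod(X^p,Y^p)$ directly. We have $R(X+Y)=\sum_{n\ge p}(X+Y)^n/n!$, and modulo $(X^p,Y^p)$ only monomials $X^iY^j$ with $i,j\le p-1$ survive; since $i+j=n\ge p$ forces $n=i+j\le 2p-2$, only finitely many $n$ contribute. For a fixed such monomial, $\binom{n}{i}/n!=1/(i!\,j!)$, so
\[
R(X+Y)\equiv\sum_{\substack{i,j\le p-1\\ i+j\ge p}}\frac{X^iY^j}{i!\,j!}\pmod{(X^p,Y^p)}.
\]
This is exactly the obstruction sum appearing in~\eqref{eq:obstruction} (with $X,Y$ in place of the two copies of $D$). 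The remaining task is to show this double sum equals $E(X+Y)\cdot\sum_{i=1}^{p-1}(-1)^iX^iY^{p-i}/i$ modulo $(X^p,Y^p)$, which is equivalent to the whole proposition via the identity above.

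To finish, I would factor out the lowest-degree part. Using $i!\,(p-i)!\equiv(-1)^i i\pmod p$ (Wilson's theorem), the degree-$p$ part of $R(X+Y)$ is $S:=\sum_{i=1}^{p-1}X^iY^{p-i}/(i!\,(p-i)!)\equiv\sum_{i=1}^{p-1}(-1)^iX^iY^{p-i}/i$. So it suffices to prove $R(X+Y)\equiv E(X+Y)\,S\pmod{(X^p,Y^p)}$. I would deduce this by a clean substitution trick rather than a term-by-term match: set $T=X+Y$ and observe that, modulo $(X^p,Y^p)$, one has $R(X+Y)=\sum_{n=p}^{2p-2}T^n/n!$; meanwhile $E(T)\cdot S$ must be reduced modulo $(X^p,Y^p)$, not modulo $(T^p)$, which is where the combinatorics bites. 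The main obstacle is precisely this reduction: $S$ already has total degree $p$, so $E(T)S=\bigl(\sum_{r=0}^{p-1}T^r/r!\bigr)S$ a priori has terms of total degree up to $2p-1$, and one must check that after discarding all monomials $X^iY^j$ with $i\ge p$ or $j\ge p$ the result matches $\sum_{n=p}^{2p-2}T^n/n!$ reduced the same way. I expect this to come down to the polynomial identity $\sum_{i=1}^{n-1}\binom{n}{i}(-1)^i/i$-type coefficients telescoping correctly; concretely, comparing the coefficient of $X^aY^b$ on both sides (for $a,b\le p-1$, $a+b\ge p$) reduces to a single binomial-coefficient congruence modulo $p$ that follows from Vandermonde's identity together with the vanishing $\binom{p}{k}\equiv0\pmod p$ for $0<k<p$. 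Once that coefficient comparison is in place, the proposition follows.
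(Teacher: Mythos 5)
Your plan is, in substance, the paper's own proof: once the detour through $\exp$ is stripped away, what you are proving is that $E(X)E(Y)-E(X+Y)=\sum_{a,b\le p-1,\ a+b\ge p}X^aY^b/(a!\,b!)$, and that this sum is congruent modulo $(X^p,Y^p)$ to $E(X+Y)\sum_{i=1}^{p-1}X^iY^{p-i}/\bigl(i!\,(p-i)!\bigr)$, which by Wilson's theorem is the stated right-hand side; this is exactly the computation carried out in the paper. Two caveats. First, your justification of the opening step is wrong as written: $\exp$ does not exist in $\F_p[[X,Y]]$ (the denominators $i!$ with $i\ge p$ are zero there, not units), and the tail $R$ has non-$p$-integral coefficients, so the functional equation cannot be invoked ``in $\F_p[[X,Y]]$''. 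The detour is also unnecessary: expanding $E(X)E(Y)$ and $E(X+Y)$ directly gives the displayed obstruction sum exactly (not merely modulo the ideal), with $p$-integral coefficients, and this identity is precisely the paper's starting point; alternatively your argument can be repaired by working in $\Q[[X,Y]]$ and reducing only after passing to the quotient by $(X^p,Y^p)$. Second, the step you defer (``I expect this to come down to a single binomial-coefficient congruence'') is the entire content of the proof: comparing the coefficient of $X^aY^b$ for $a,b\le p-1$, $a+b\ge p$, one needs $\sum_{i}\binom{a}{i}\binom{b}{p-i}=\binom{a+b}{p}\equiv 1\pmod p$, which indeed follows from Vandermonde together with $\binom{p}{k}\equiv 0\pmod p$ for $0<k<p$, and is exactly the identity the paper inserts before re-indexing the expansion of $E(X+Y)$ times the degree-$p$ term. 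So your plan does close, but only once that coefficient verification is actually written out; as it stands the crux is predicted rather than proved.
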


\begin{proof}
We start with
\begin{equation*}
\begin{split}
E(X)\cdot E(Y)-E(X+Y)
&=
\sum_{k=p}^{2p-2}
\sum_{s=k-p+1}^{p-1}\frac{X^sY^{k-s}}{s!\,(k-s)!}
\\
&=
\sum_{i=1}^{p-1} \,
\sum_{k=p}^{2p-2}
\sum_{s=k-p+1}^{p-1}
\binom{s}{i}\binom{k-s}{p-i}
\frac{X^sY^{k-s}}{s!\,(k-s)!},
\end{split}
\end{equation*}
where we have used the fact that for $k$ and $s$ in the given
ranges we have
\[
\sum_{i=1}^{p-1}
\binom{s}{i}\binom{k-s}{p-i}=
\sum_{i=0}^{p}
\binom{s}{i}\binom{k-s}{p-i}=
\binom{k}{p}\equiv 1\pmod{p}.
\]
Now we have
$E(X+Y)
=\sum_{h=0}^{p-1}(X+Y)^h/h!
=\sum_{h=0}^{p-1}
\sum_{j}\binom{h}{j}X^jY^{p-j}/h!
$
where we need not specify the range for the summation variable $j$
because the binomial coefficient vanishes outside that range.
Now we expand
\begin{equation*}
\begin{split}
E(X+Y)
\sum_{i=1}^{p-1}\frac{X^iY^{p-i}}{i!\,(p-i)!}
&=
\sum_{i=1}^{p-1}
\sum_{h=0}^{p-1}
\sum_j%{j=0}^h
\binom{h}{j}
\frac{X^{i+j}Y^{p-i+h-j}}{h!\,i!\,(p-i)!}
\\
&=
\sum_{i=1}^{p-1}
\sum_{k=p}^{2p-1}
\sum_s%{s=i}^{k-p+i}
\binom{k-p}{s-i}
\frac{X^{s}Y^{k-s}}{(k-p)!\,i!\,(p-i)!}
\\
&=
\sum_{i=1}^{p-1} \,
\sum_{k=p}^{2p-1}
\sum_{s=0}^k%{s=i}^{k-p+i}
\binom{s}{i}\binom{k-s}{p-i}
\frac{X^sY^{k-s}}{s!\,(k-s)!}.
\end{split}
\end{equation*}
This is similar to the expression we found earlier for
$E(X)\cdot E(Y)-E(X+Y)$,
except that the summation range for $k$ and $s$ are larger.
However, each additional monomial in the expression found for
$E(X+Y) \sum_{i=1}^{p-1}X^iY^{p-i}/(i!\,(p-i)!)$
is a multiple of either $X^p$ or $Y^p$,
proving that the claimed congruence holds.
\end{proof}

Equation~\eqref{eq:my-obstruction} has a remarkable consequence.
Roughly speaking, although $E(D)$ may fail to be an algebra automorphism,
according to Equation~\eqref{eq:my-obstruction} the location of this failure when $D^p=0$ is somehow more under control
than it would appear from Equation~\eqref{eq:obstruction}.
The crucial point is that in each term of the sum which appears in the right-hand side of Equation~\eqref{eq:my-obstruction}
the derivation $D$ formally appears $p$ times.
(In fact, a multiple of $p$ times is all we need.)
This allows one to show that one feature of automorphisms, that of sending any grading of $A$ into
another grading, is also enjoyed by $E(D)$ to some extent.

\begin{theorem}[{\cite[Theorem~2.3]{Mat:Artin-Hasse}}]\label{thm:truncated-grading}
Let $A=\bigoplus_i A_i$ be a non-associative algebra
over a field of prime characteristic $p$,
graded over the integers modulo $m$.
Suppose that $A$ has a graded derivation $D$ of degree $d$,
with $m\mid pd$, such that $D^p=0$.
Then the direct sum decomposition $A=\bigoplus_i E(D) A_i$
is a grading of $A$ over the integers modulo $m$.
\end{theorem}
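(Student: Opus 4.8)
The plan is to deduce the theorem directly from the obstruction formula~\eqref{eq:my-obstruction}, which is available here since $D^p=0$. First I would dispose of the direct-sum part of the assertion: because $D^p=0$, the operator $E(D)=\Id+\sum_{i=1}^{p-1}D^i/i!$ differs from $\Id$ by a nilpotent operator, hence is invertible. Applying the invertible linear map $E(D)$ to the vector-space decomposition $A=\bigoplus_i A_i$ therefore yields a direct sum decomposition $A=\bigoplus_i E(D)A_i$. Consequently the real content of the statement is that this new decomposition is compatible with the multiplication of $A$, that is, $\bigl(E(D)A_a\bigr)\cdot\bigl(E(D)A_b\bigr)\subseteq E(D)A_{a+b}$ for all $a,b\in\Z/m\Z$.

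To establish this inclusion I would pick homogeneous elements $x\in A_a$ and $y\in A_b$ and expand the product $E(D)x\cdot E(D)y$ by means of Equation~\eqref{eq:my-obstruction}, obtaining
\[
E(D)x\cdot E(D)y=E(D)(xy)+E(D)\sum_{i=1}^{p-1}\frac{(-1)^i}{i}\,D^ix\cdot D^{p-i}y .
\]
The first summand lies in $E(D)A_{a+b}$ since $xy\in A_{a+b}$. For the second, the point is that in every term of the sum the derivation $D$ is applied a total of $p$ times, so that $D^ix\cdot D^{p-i}y$ is homogeneous of degree $a+id+b+(p-i)d=a+b+pd$; as $m\mid pd$, this degree equals $a+b$ in $\Z/m\Z$. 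Hence the whole sum $\sum_{i=1}^{p-1}(-1)^iD^ix\cdot D^{p-i}y/i$ belongs to $A_{a+b}$, and applying $E(D)$ to it gives an element of $E(D)A_{a+b}$. Adding the two contributions yields $E(D)x\cdot E(D)y\in E(D)A_{a+b}$, which is what we want.

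I do not expect any genuine obstacle here once Equation~\eqref{eq:my-obstruction} (equivalently Proposition~\ref{prop:functional-truncated}) is granted: the argument is essentially bookkeeping of degrees, and the hypothesis $m\mid pd$ enters precisely to annihilate the degree shift $pd$ produced by the $p$ copies of $D$ occurring in each term of the obstruction. The only place deserving a word of care is the verification that $E(D)$ is invertible, so that one truly obtains a direct sum decomposition of $A$ rather than merely a spanning family of subspaces; this is an immediate consequence of $D^p=0$.
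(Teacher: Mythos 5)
Your argument is correct and is exactly the route the paper indicates: it invokes Equation~\eqref{eq:my-obstruction} (valid since $D^p=0$), notes that each term of the obstruction involves $D$ applied a total of $p$ times so the degree shift $pd$ vanishes modulo $m$, and uses invertibility of $E(D)$ to get a genuine direct sum. Nothing is missing.
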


It turns out that the assumption $D^p=0$ of Theorem~\ref{thm:truncated-grading} can be relaxed to a bare nilpotency assumption
provided we replace the truncated exponential with the Artin-Hasse exponential.
The Artin-Hasse exponential series is defined as
\[
E_p(X):=
\exp\left(\sum_{i=0}^{\infty}X^{p^i}/p^i\right)=
\prod_{i=0}^{\infty}\exp(X^{p^i}/p^i).
\]
The infinite product makes sense because only a finite number of factors
are needed to compute the coefficient of a given power of $X$ in the result.
The Artin-Hasse exponential is a formal power series in $\Q[[X]]$, but its coefficients
are $p$-adic integers, and so the series actually belongs to
$\Z_{(p)}[[X]]$, where $\Z_{(p)}$ is the localization of $\Z$
at the complement of the ideal $(p)$.
This is an immediate application of the Dieudonn\'{e}-Dwork criterion,
see for example~\cite[p.~93]{Koblitz} or~\cite[p.~392]{Robert}.
In particular, the Artin-Hasse exponential can be evaluated on nilpotent elements of any ring
of characteristic $p$.
The congruence of Proposition~\ref{prop:functional-truncated} for the truncated exponential admits the following analogue
for the Artin-Hasse series, viewed modulo $p$
(see the proof of~\cite[Theorem~2.2]{Mat:Artin-Hasse}).

\begin{prop}\label{prop:functional-AH}
There exist
$a_{ij}\in\F_p$ with $a_{ij}=0$ unless $p\mid i+j$, such that
\[
E_p(X)\cdot E_p(Y)=
E_p(X+Y)
\Bigl(1+
\sum_{i,j=1}^{\infty}a_{ij}X^iY^j
\Bigr)
\]
in the power series ring $\F_p[[X,Y]]$.
\end{prop}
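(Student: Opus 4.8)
The plan is to prove the identity first over $\Q$, where $E_p$ is genuinely the composition of $\exp$ with a series, so that the functional equation of the ordinary exponential applies, and then to transfer it to $\F_p$ using the integrality of the Artin--Hasse series recalled above.

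First I would set $F(X,Y):=E_p(X)\,E_p(Y)\,E_p(X+Y)^{-1}$, which is a well-defined element of $\Q[[X,Y]]$ because $E_p(X+Y)$ has constant term $1$. Writing $E_p(X)=\exp\bigl(\sum_{i\ge 0}X^{p^i}/p^i\bigr)$ and using that the arguments of $\exp$ involved commute and have zero constant term, the relations $\exp(A)\exp(B)=\exp(A+B)$ and $\exp(A)^{-1}=\exp(-A)$ give
\[
F(X,Y)=\exp\!\Bigl(\,\sum_{i\ge 1}\frac{X^{p^i}+Y^{p^i}-(X+Y)^{p^i}}{p^i}\,\Bigr),
\]
where the $i=0$ term has dropped out since $X+Y-(X+Y)=0$. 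Denote by $G(X,Y)$ the argument of $\exp$ on the right-hand side; it has no constant term.

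The key observation is a homogeneity statement modulo $p$: every monomial occurring in $G$ has total degree divisible by $p$, because for each $i\ge 1$ the polynomial $X^{p^i}+Y^{p^i}-(X+Y)^{p^i}$ is supported on monomials of total degree exactly $p^i$. The $\Q$-subspace of $\Q[[X,Y]]$ spanned by the monomials of total degree $\equiv 0\pmod p$ (the constants included) is closed under multiplication, hence contains every power $G^n$, and therefore $F=\exp(G)=\sum_{n\ge 0}G^n/n!$ has the form $1+\sum b_{ij}X^iY^j$ with $b_{ij}\in\Q$ and $b_{ij}=0$ unless $p\mid i+j$. Moreover $F(X,0)=E_p(X)\cdot 1\cdot E_p(X)^{-1}=1$ and likewise $F(0,Y)=1$, so the nonzero $b_{ij}$ in fact have $i,j\ge 1$.

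It then remains to reduce modulo $p$. By the Dieudonn\'e--Dwork integrality recalled above, $E_p(X)\in\Z_{(p)}[[X]]$; hence $E_p(X)E_p(Y)$ and $E_p(X+Y)$ lie in $\Z_{(p)}[[X,Y]]$, the latter is invertible there, and so $F\in\Z_{(p)}[[X,Y]]$, i.e. each $b_{ij}\in\Z_{(p)}$. Applying the reduction map $\Z_{(p)}[[X,Y]]\to\F_p[[X,Y]]$ to the identity $E_p(X)E_p(Y)=E_p(X+Y)\,F(X,Y)$ yields the assertion with $a_{ij}$ the image of $b_{ij}$. The computation is essentially forced; the only step that draws on something beyond formal manipulation of $\exp$ is this last reduction, where one genuinely needs the $p$-integrality of the coefficients of $E_p$, since $\exp$ has no meaning over $\F_p$. (Alternatively one could run the combinatorial argument used to prove Proposition~\ref{prop:functional-truncated}, but the exponential computation above keeps the degree bookkeeping transparent.)
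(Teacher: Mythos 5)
Your proof is correct. Note that the paper does not prove Proposition~\ref{prop:functional-AH} in-text but defers to the proof of Theorem~2.2 of~\cite{Mat:Artin-Hasse}; your route --- writing $E_p(X)E_p(Y)E_p(X+Y)^{-1}$ over $\Q$ as $\exp\bigl(\sum_{i\ge1}(X^{p^i}+Y^{p^i}-(X+Y)^{p^i})/p^i\bigr)$, observing that all terms have total degree divisible by $p$ and that $F(X,0)=F(0,Y)=1$, and then using the Dieudonn\'e--Dwork integrality of $E_p$ together with the invertibility of $E_p(X+Y)$ in $\Z_{(p)}[[X,Y]]$ to reduce modulo $p$ --- is essentially the same standard argument underlying that citation.
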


Thus, the quotient series $E_p(X)E_p(Y)/E_p(X+Y)$ has the property that
all terms have degree divisible by $p$.
It was proved in~\cite{Mat:exponential} that this
essentially characterizes the reduction modulo $p$ of the
Artin-Hasse series, up to some natural variations
(see the final part of the proof of our Proposition~\ref{prop:S(X)}).
The consequence for gradings is the following generalization of Theorem~\ref{thm:truncated-grading}.

\begin{theorem}[{\cite[Theorem~1]{Mat:Artin-Hasse}}]\label{thm:AH-grading}
Let $A=\bigoplus_i A_i$ be a non-associative algebra
over a field of prime characteristic $p$,
graded over the integers modulo $m$.
Suppose that $A$ has a nilpotent graded derivation $D$ of degree $d$,
with $m\mid pd$.
Then the direct sum decomposition $A=\bigoplus_i E_p(D) A_i$
is a grading of $A$ over the integers modulo $m$.
\end{theorem}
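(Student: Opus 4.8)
The plan is to imitate the tensor product device that proves $\exp(D)$ an automorphism in characteristic zero, feeding in the functional equation of Proposition~\ref{prop:functional-AH} in place of $\exp(X)\exp(Y)=\exp(X+Y)$, and then exploiting the vanishing condition $a_{ij}=0$ unless $p\mid i+j$ together with the hypothesis $m\mid pd$ to keep track of degrees.

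First I would note that $E_p(D)$ is a well-defined invertible linear operator on $A$: since $D$ is nilpotent and $E_p(X)\in\Z_{(p)}[[X]]$ has constant term $1$, the operator $E_p(D)$ is a finite sum and equals $\Id$ plus a nilpotent operator, hence is bijective. Therefore $A=\bigoplus_i E_p(D)A_i$ is automatically a direct sum decomposition, and the only thing to prove is that it is a grading, namely that $\bigl(E_p(D)A_r\bigr)\bigl(E_p(D)A_s\bigr)\subseteq E_p(D)A_{r+s}$ for all $r,s$.

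Next, writing $m\colon A\otimes A\to A$ for the multiplication, the derivation identity $D\circ m=m\circ(D\otimes\Id+\Id\otimes D)$ gives $f(D)\circ m=m\circ f(D\otimes\Id+\Id\otimes D)$ for any power series $f$, in particular $E_p(D)\circ m=m\circ E_p(D\otimes\Id+\Id\otimes D)$. As $D$ is nilpotent, $X=D\otimes\Id$ and $Y=\Id\otimes D$ are commuting nilpotent operators on $A\otimes A$, so the formal identity of Proposition~\ref{prop:functional-AH} can be specialized to them. Combining these facts, for $x\in A_r$ and $y\in A_s$ I obtain
\[
E_p(D)x\cdot E_p(D)y
= E_p(D)\Bigl(xy+\sum_{i,j\ge 1}a_{ij}\,D^ix\cdot D^jy\Bigr).
\]
It then remains to show the argument of $E_p(D)$ on the right lies in $A_{r+s}$. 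The term $xy$ does, since the $A_i$ form a grading. For the others, $D^ix\cdot D^jy\in A_{r+s+(i+j)d}$ because $D$ is homogeneous of degree $d$; and whenever $a_{ij}\neq 0$ one has $p\mid i+j$, so $(i+j)d$ is a multiple of $pd$, hence $\equiv 0\pmod m$ by the hypothesis $m\mid pd$. Thus every surviving term $D^ix\cdot D^jy$ also lies in $A_{r+s}$, so the whole right-hand side lies in $E_p(D)A_{r+s}$, as required.

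The degree bookkeeping just described is the crux, and the only delicate point is the interplay of the two hypotheses: the restriction $a_{ij}=0$ unless $p\mid i+j$ in Proposition~\ref{prop:functional-AH} is exactly what makes the divisibility $m\mid pd$ enough to absorb all the error terms into the component of degree $r+s$. I would also pause to justify the passage from the formal identity in $\F_p[[X,Y]]$ to an operator identity on $A\otimes A$: this is legitimate because nilpotency of $D$ renders all the relevant sums finite, and because $E_p$ has $p$-integral coefficients, so its reduction modulo $p$ — which is what governs the computation over a field of characteristic $p$ — is precisely the series entering Proposition~\ref{prop:functional-AH}. Beyond what is already packaged in that proposition, no new difficulty arises.
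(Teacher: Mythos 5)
Your proof is correct and follows essentially the route the paper indicates for this result: the tensor product device applied to the functional equation of Proposition~\ref{prop:functional-AH}, with the condition $a_{ij}=0$ unless $p\mid i+j$ combining with $m\mid pd$ to keep every error term $D^ix\cdot D^jy$ in the component of degree $r+s$, and invertibility of $E_p(D)$ (identity plus nilpotent) guaranteeing the decomposition is direct. No substantive difference from the argument in~\cite[Theorem~1]{Mat:Artin-Hasse} as sketched in Section~\ref{sec:artin-hasse}.
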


Note that the derivation $D$ is graded of the same degree $d$ also with respect to the new grading
given by Theorem~\ref{thm:AH-grading}, because it commutes with $E_p(D)$.
Also, Theorem 4 finds applications also to cyclic gradings where the
condition $m \vert pd$ is originally not satisfied (and even in the case $m=0$),
as long as we are willing to pass to a coarser grading where the degrees
are viewed modulo the greatest common divisor $(m,pd)$.
Clearly, $D$ is also a graded derivation of degree $d$
(viewed modulo $(m,pd)$) with respect to the latter grading, and Theorem~\ref{thm:AH-grading} applies.

Theorem~\ref{thm:AH-grading} may be described as an instance of a {\em grading switching,}
where the name is inspired by the technique of~\emph{toral switching} in modular Lie algebras.
We briefly discuss the connection with the latter, referring the reader to~\cite[Section~3]{Mat:Artin-Hasse}
for further details.
Roughly speaking, toral switching replaces a torus $T$ of a restricted Lie algebra $L$
with another torus $T_x$ which is more suitable for further study of $L$.
In the simplest and original setting of~\cite{Win:toral} this amounts to applying to $T$ the
exponential of the inner derivation $\ad x$, for some root vector $x\in L$ with respect to $T$.
Because $(\ad x)^2T=0$ the exponential of $\ad x$ can be taken to
be $1+\ad x$ for this purpose.
This is reminiscent of, and certainly motivated by,
the classical characteristic
zero situation where $\exp(\ad x)$ for some root vector $x$ is used to
conjugate a Cartan subalgebra into another.
However, in more general settings $(1+\ad x)T$ fails to be a
torus, and hence the construction of $T_x$ is slightly more involved.
This technique was originally introduced by Winter in~\cite{Win:toral} and later generalized by Block and Wilson
in~\cite{BlWil:rank-two}.
The most general version was finally produced by Premet
in~\cite{Premet:Cartan}.
An exposition of Premet's version can be found in~\cite[Section~1.5]{Strade:book}.
A crucial step in this process is to keep track of the root space decomposition with respect to the new torus,
by constructing certain linear maps $E(x,\lambda)$ (in the notation of~\cite[Section~1.5]{Strade:book})
from the root spaces with respect to $T$ onto the root spaces with respect to $T_x$.
It was shown in~\cite[Section~3]{Mat:Artin-Hasse} that if $x$ is $p$-nilpotent then
$E(x,\lambda)$ coincides with a variation of $E_p(\ad x)$.

Our grading switching, however, is able to produce certain gradings which are not attainable by toral switching,
specifically because they are over groups having elements of order $p^2$,
while gradings obtained as toral decompositions are over groups of exponent $p$.
One concrete example arises from the Zassenhaus algebra $W(1:n)$, which was shown in~\cite[Example~5.3 and Theorem~2]{Mat:Artin-Hasse}
to have gradings {\em genuinely} over the integers modulo $p^{s+1}$, for any $0\le s<n$,
where {\em genuinely} means that they cannot be obtained from gradings over
the integers, or the integers modulo a larger power of $p$,
simply by reducing the degrees modulo $p^{s+1}$.
We review that instance of grading switching in Example~\ref{ex:Zassenhaus},
using more recently established terminology which we recall in Section~\ref{sec:applications}.

In the next section we develop a more general version of grading switching
where the assumption that $D$ is nilpotent is unnecessary.

\section{Laguerre polynomials of derivations}\label{sec:laguerre}

In order to remove the nilpotency assumption on $D$ of Theorem~\ref{thm:AH-grading}
we need to replace the Artin-Hasse series with a new tool,
consisting of certain Laguerre polynomials.
The classical (generalized) Laguerre polynomial of degree $n \geq 0$ is defined as
\[
L_n^{(\alpha)}(X)=\sum_{k=0}^n\binom{\alpha+n}{n-k}
\frac{(-X)^k}{k!},
\]
where $\alpha$ is a parameter, usually taken in the complex numbers.
However, we may also view $L_n^{(\alpha)}(X)$ as a polynomial with rational coefficients in the two
indeterminates $\alpha$ and $X$.

Now fix a prime $p$.
We are essentially interested only in the polynomial
$L_{p-1}^{(\alpha)}(X)$.
The reason is that, viewed in characteristic $p$, it may be thought of as a generalization
of the truncated exponential
$E(X)=\sum_{k=0}^{p-1}X^k/k!$
which we mentioned in the introduction.
In fact, we have
$L_{p-1}^{(0)}(X)\equiv E(X)\pmod{p}$
because
$\binom{p-1}{k}\equiv\binom{-1}{k}=(-1)^k\pmod{p}$
for $0\le k<p$,
and the full sense of this generalization should be conveyed by
the congruence
\begin{equation}\label{eq:Lmodp}
L_{p-1}^{(\alpha)}(X)
\equiv
(1-\alpha^{p-1})
\sum_{k=0}^{p-1}\frac{X^k}
{(\alpha+k)(\alpha+k-1)\cdots(\alpha+1)}
\pmod{p},
\end{equation}
which holds because
$(\alpha+p-1)\cdots(\alpha+1)\equiv \alpha^{p-1}-1\pmod{p}$.

The crucial property of $L_{p-1}^{(\alpha)}(X)$ for our purposes is that its reduction modulo $p$ satisfies the
differential equation
\begin{equation}\label{eq:L-diff}
X\cdot\frac{d}{dX}
L_{p-1}^{(\gamma)}(X)
\equiv
(X-\gamma)\cdot L_{p-1}^{(\gamma)}(X)
+
X^p-(\gamma^p-\gamma)
\pmod{p}.
\end{equation}
In the special case where $\gamma=0$ this reads
\begin{equation}\label{eq:E-diff}
XE'(X)
\equiv
XE(X)+X^p
\pmod{p}
\end{equation}
in terms of the truncated exponential $E(X)$.

The differential equation modulo $p$ for $L_{p-1}^{(\alpha)}(X)$
which we stated in Equation~\eqref{eq:L-diff} is important to us
because it implies a congruence similar to the
functional equation $\exp(X)\exp(Y)=\exp(X+Y)$ satisfied by the classical
exponential.
Before stating our precise result in Proposition~\ref{prop:functional-Laguerre}
we pause to recall how this might be done in the classical case of the ordinary exponential in characteristic zero.
Suppose that a power series $F(X)\in 1+X\Q[[X]]$ satisfies the differential equation $F'(X)=F(X)$
of the exponential function.
Because the series $F(X+Y)$ is invertible in $\Q[[X,Y]]$ we may consider the quotient $F(X)F(Y)/F(X+Y)$.
Its partial derivative with respect to either variable
is seen to vanish because of the differential equation, and hence the quotient series
must be a constant, necessarily equal to $1$.
A similar argument, although technically more involved,
yields the following result.

\begin{prop}[{\cite[Proposition~2]{AviMat:Laguerre}}]\label{prop:functional-Laguerre}
Consider the subring
$R=\F_p\bigl[\alpha,\beta,\bigl((\alpha+\beta)^{p-1}-1\bigr)^{-1}\bigr]$ of the ring
$\F_p(\alpha,\beta)$ of rational expressions in the indeterminates $\alpha$ and $\beta$,
and let $X$ and $Y$ be further indeterminates.
There exist rational expressions $c_i(\alpha,\beta)\in R$, such that
\[
L_{p-1}^{(\alpha)}(X)
L_{p-1}^{(\beta)}(Y)
\equiv
L_{p-1}^{(\alpha+\beta)}(X+Y)
\Bigl(
c_0(\alpha,\beta)+\sum_{i=1}^{p-1}c_i(\alpha,\beta)X^iY^{p-i}
\Bigr)
\]
in $R[X,Y]$, modulo the ideal generated by
$X^p-(\alpha^p-\alpha)$
and
$Y^p-(\beta^p-\beta)$.
\end{prop}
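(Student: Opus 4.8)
The plan is to mimic the classical derivation of $\exp(X)\exp(Y)=\exp(X+Y)$ from $F'=F$, where one checks that $F(X)F(Y)/F(X+Y)$ has vanishing partial derivatives; here the differential equation~\eqref{eq:L-diff} plays that role. Work in $\overline{R}:=R[X,Y]/\bigl(X^{p}-(\alpha^{p}-\alpha),\,Y^{p}-(\beta^{p}-\beta)\bigr)$, a free $R$-module with basis $\{X^{i}Y^{j}:0\le i,j\le p-1\}$, and let $\delta:=X\,\tfrac{d}{dX}+Y\,\tfrac{d}{dY}$. This is a derivation of $R[X,Y]$ killing both generators of our ideal, since $\delta(X^{p})=pX^{p}=0$ and likewise for $Y^{p}$, so it descends to $\overline{R}$; on the basis $\delta(X^{i}Y^{j})=(i+j)X^{i}Y^{j}$, whence $\ker\delta=R\cdot 1\oplus\bigoplus_{i=1}^{p-1}R\,X^{i}Y^{p-i}$ --- exactly the $R$-span of the monomials permitted in the asserted cofactor. (This is the Laguerre counterpart of the condition $p\mid i+j$ in Propositions~\ref{prop:functional-truncated} and~\ref{prop:functional-AH}: the correction monomials $X^{i}Y^{p-i}$ all have total degree $p$.) Next I would evaluate $\delta$ on each side. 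Using~\eqref{eq:L-diff} with $\gamma=\alpha$ together with the relation $X^{p}=\alpha^{p}-\alpha$ of $\overline{R}$, the term $X^{p}-(\alpha^{p}-\alpha)$ there vanishes and one gets $\delta\bigl(L_{p-1}^{(\alpha)}(X)\bigr)=(X-\alpha)L_{p-1}^{(\alpha)}(X)$ in $\overline{R}$, and symmetrically in $Y,\beta$; applying~\eqref{eq:L-diff} with $\gamma=\alpha+\beta$ at the variable $X+Y$ and using the key identity $(X+Y)^{p}=X^{p}+Y^{p}=(\alpha^{p}-\alpha)+(\beta^{p}-\beta)=(\alpha+\beta)^{p}-(\alpha+\beta)$ in $\overline{R}$, the correction term again drops, giving $\delta\bigl(L_{p-1}^{(\alpha+\beta)}(X+Y)\bigr)=(X+Y-\alpha-\beta)L_{p-1}^{(\alpha+\beta)}(X+Y)$. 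Writing $Q=L_{p-1}^{(\alpha)}(X)L_{p-1}^{(\beta)}(Y)$ and $D=L_{p-1}^{(\alpha+\beta)}(X+Y)$ in $\overline{R}$, the Leibniz rule yields $\delta Q=(X+Y-\alpha-\beta)Q$ and $\delta D=(X+Y-\alpha-\beta)D$. Hence, \emph{provided $D$ is invertible in $\overline{R}$}, the quotient $C:=QD^{-1}$ satisfies $\delta C=(\delta Q)D^{-1}-QD^{-1}(\delta D)D^{-1}=0$, so $C\in\ker\delta$; that is, $C=c_{0}+\sum_{i=1}^{p-1}c_{i}X^{i}Y^{p-i}$ with $c_{i}\in R$, and $Q=DC$ in $\overline{R}$ is precisely the claimed congruence.

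The heart of the matter, and where I expect most of the work to lie, is thus the invertibility of $D$ in $\overline{R}$ --- not merely in its fraction field. Over the latter this is trivial, since by~\eqref{eq:Lmodp} (or directly) the $X^{0}Y^{0}$-coefficient of $D$ is $L_{p-1}^{(\alpha+\beta)}(0)\equiv 1-(\alpha+\beta)^{p-1}\pmod p$, a unit of $R$; but that would only yield $c_{i}\in\F_p(\alpha,\beta)$, whereas the statement asserts $c_{i}\in R$. To obtain genuine invertibility in $\overline{R}$, apply Frobenius and reduce: $D^{p}=L_{p-1}^{(\alpha+\beta)}(X+Y)^{p}=L_{p-1}^{((\alpha+\beta)^{p})}\bigl((X+Y)^{p}\bigr)=L_{p-1}^{((\alpha+\beta)^{p})}\bigl((\alpha+\beta)^{p}-(\alpha+\beta)\bigr)$, which is an honest polynomial in $\alpha+\beta$, lying in the subring $\F_p[\alpha+\beta]\subseteq R$. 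Since $D^{p}=D\cdot D^{p-1}$, it suffices to show this polynomial $f(\gamma)$, $\gamma=\alpha+\beta$, is a unit of $R$, i.e.\ that every root of $f$ lies in $\F_p\setminus\{0\}$, so that $f$ divides a power of $\gamma^{p-1}-1=\prod_{j=1}^{p-1}(\gamma-j)$, a unit of $R$. Now $f(0)=L_{p-1}^{(0)}(0)=1\neq 0$; and if $\gamma_{0}\notin\F_p$ then $\mu:=\gamma_{0}^{p}\notin\F_p$ and $T_{0}:=\gamma_{0}^{p}-\gamma_{0}\neq 0$ satisfies $T_{0}^{p}=\mu^{p}-\mu$, so with $L:=L_{p-1}^{(\mu)}$ equation~\eqref{eq:L-diff} at $T=T_{0}$ (parameter $\mu$) reads $T_{0}L'(T_{0})=(T_{0}-\mu)L(T_{0})=-\gamma_{0}L(T_{0})$, while differentiating the polynomial identity~\eqref{eq:L-diff} $n$ times gives $T_{0}L^{(n+1)}(T_{0})=(-\gamma_{0}-n)L^{(n)}(T_{0})+nL^{(n-1)}(T_{0})$ for all $n\ge 1$. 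If $f(\gamma_{0})=L(T_{0})=0$, the first relation forces $L'(T_{0})=0$ (as $T_{0}\neq 0$), and the recursion then forces $L^{(n)}(T_{0})=0$ for every $n$; since $\deg L\le p-1<p$ this means $L\equiv 0$, contradicting $L(0)\equiv 1-\mu^{p-1}\not\equiv 0\pmod p$ (because $\mu\notin\F_p^{\times}$). Hence $f(\gamma_{0})\neq 0$, so $f$ is a unit of $R$, $D^{p}\in R^{\times}$, $D\in\overline{R}^{\times}$, and the computation of the first paragraph finishes the proof.

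To summarize, the routine part is the differential-equation bookkeeping of the first paragraph, which produces the cofactor and its shape as soon as $D$ is known to be a unit in $\overline{R}$; the genuine obstacle is exactly that unit statement, equivalent to the integrality assertion $c_{i}\in R$ (rather than $c_{i}\in\F_p(\alpha,\beta)$), and the argument above --- passing to $D^{p}$ and controlling the roots of $L_{p-1}^{((\alpha+\beta)^{p})}\bigl((\alpha+\beta)^{p}-(\alpha+\beta)\bigr)$ via repeated differentiation of~\eqref{eq:L-diff} --- is where I would concentrate. An alternative route to $c_{i}\in R$ is to reduce modulo an irreducible $h\in\F_p[\alpha,\beta]$ not dividing $(\alpha+\beta)^{p-1}-1$: at a generic point of $V(h)$ the elements $\alpha^{p}-\alpha$ and $\beta^{p}-\beta$ become $p$-th powers, so $\overline{R}$ becomes local with nilpotent maximal ideal and a pole of some $c_{i}$ along $V(h)$ would make $D$ a zero-divisor there, forcing $h\mid f(\alpha+\beta)$; but one still needs the root computation above to see that the only such $h$ divide $(\alpha+\beta)^{p-1}-1$.
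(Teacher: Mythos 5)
Your proof is correct and follows precisely the strategy the paper sketches just before the statement (the paper itself defers the details to \cite{AviMat:Laguerre}): the differential equation \eqref{eq:L-diff} shows that the ratio of the two sides is annihilated by the Euler operator $X\frac{d}{dX}+Y\frac{d}{dY}$ acting on $R[X,Y]$ modulo the ideal, and the kernel of that operator is exactly the $R$-span of $1$ and the monomials $X^iY^{p-i}$, which gives the asserted shape of the cofactor. You have also correctly isolated and settled the genuinely delicate point --- the invertibility of $L_{p-1}^{(\alpha+\beta)}(X+Y)$ in the quotient ring, which is what makes the coefficients $c_i$ land in $R$ rather than in $\F_p(\alpha,\beta)$ and explains why precisely $(\alpha+\beta)^{p-1}-1$ must be inverted --- via the Frobenius computation $D^p=L_{p-1}^{((\alpha+\beta)^p)}\bigl((\alpha+\beta)^p-(\alpha+\beta)\bigr)$ and the root analysis obtained by repeatedly differentiating \eqref{eq:L-diff}.
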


As was the case with Propositions~\ref{prop:functional-truncated} and~\ref{prop:functional-AH},
for applications to gradings the key property of the congruence in Proposition~\ref{prop:functional-Laguerre} is that the polynomial
$c_0(\alpha,\beta)+\sum_{i=1}^{p-1}c_i(\alpha,\beta)X^iY^{p-i}$
has only terms of total degree multiple of $p$.

Because $L_{p-1}^{(0)}(X)$
coincides modulo $p$ with
$E(X)$,
Proposition~\ref{prop:functional-truncated} is recovered from
Proposition~\ref{prop:functional-Laguerre} by setting $\alpha=\beta=0$.
Proposition~\ref{prop:functional-AH} may also be viewed as a variation of the special case
of Proposition~\ref{prop:functional-Laguerre} obtained by setting
$\alpha=-\sum_{i=1}^{\infty}X^{p^i}$ and
$\beta=-\sum_{i=1}^{\infty}Y^{p^i}$.
The proper interpretation of this statement will be clarified by the following result.

\begin{prop}\label{prop:S(X)}
The power series
$S(X)=L_{p-1}^{(-\sum_{i=1}^{\infty}X^{p^i})}(X)\in\F_p[[X]]$
satisfies
\[
S(X)=E_p(X)\cdot G(X^p)
\]
for some $G(X)\in 1+X\F_p[[X]]$.
\end{prop}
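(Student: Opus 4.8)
The plan is to show that $S(X)$ and the reduction modulo $p$ of $E_p(X)$ satisfy one and the same first-order differential equation over $\F_p$, and then to deduce the factorization from the vanishing of the derivative of their quotient. Everything hinges on two elementary facts about $u:=\sum_{i=1}^{\infty}X^{p^i}\in\F_p[[X]]$: its formal derivative $du/dX=\sum_{i\ge1}p^iX^{p^i-1}$ is zero in characteristic $p$; and, by the Frobenius, $u^p=\sum_{i\ge2}X^{p^i}=u-X^p$, so that $(-u)^p-(-u)=X^p$. Observe also that $S(X)=L_{p-1}^{(\alpha)}(X)$ with the indeterminate $\alpha$ specialized to $-u$, and that $L_{p-1}^{(\alpha)}(X)$ is a genuine polynomial over $\F_p$ in $\alpha$ and $X$ (the only denominators are the invertible constants $k!$ and $(p-1-k)!$ with $0\le k\le p-1$), so this substitution is legitimate.

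First I would compute $S'(X)$. By the chain rule for substituting a power series into a polynomial, $\frac{d}{dX}S(X)$ equals the partial $X$-derivative of $L_{p-1}^{(\gamma)}(X)$ evaluated at $\gamma=-u$, plus its partial $\gamma$-derivative times $d(-u)/dX$; the latter term vanishes. Substituting $\gamma=-u$ into the differential equation~\eqref{eq:L-diff} and using $(-u)^p-(-u)=X^p$ to cancel the inhomogeneous term $X^p-(\gamma^p-\gamma)$, one gets
\[
X\,S'(X)=(X+u)\,S(X)=\Bigl(\sum_{i\ge0}X^{p^i}\Bigr)S(X)
\]
in $\F_p[[X]]$, with $S(0)=L_{p-1}^{(0)}(0)=\binom{p-1}{p-1}=1$. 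On the other hand, from $\log E_p(X)=\sum_{i\ge0}X^{p^i}/p^i$ one finds $X\,E_p'(X)/E_p(X)=\sum_{i\ge0}X^{p^i}$ over $\Z_{(p)}$, hence after reduction modulo $p$ the series $E_p(X)$ satisfies $X\,E_p'(X)=(X+u)E_p(X)$, with $E_p(0)=1$. Now write $S(X)=H(X)E_p(X)$ with $H(X)\in1+X\F_p[[X]]$; differentiating and multiplying by $X$ gives $XS'=XH'E_p+H\,XE_p'$, which after substituting the two differential equations collapses to $XH'E_p=0$, hence $H'=0$ since $E_p$ is a unit in $\F_p[[X]]$. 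A power series over $\F_p$ with vanishing derivative is a power series in $X^p$, so $H(X)=G(X^p)$ for some $G\in\F_p[[X]]$, and $H(0)=1$ forces $G\in1+X\F_p[[X]]$. Thus $S(X)=E_p(X)\,G(X^p)$, as claimed.

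The step needing the most care is the specialization: one must check that $L_{p-1}^{(\alpha)}(X)$ truly lies in $\F_p[\alpha,X]$ so that plugging the power series $-u$ into the parameter slot is meaningful and commutes with $d/dX$, and that the parameterized differential equation~\eqref{eq:L-diff} then specializes correctly---which is precisely where $d(-u)/dX=0$ and $(-u)^p-(-u)=X^p$ are used; the rest is routine power-series manipulation. A variant avoiding the differentiation of $S(X)$ altogether would instead set $\alpha=-\sum_{i\ge1}X^{p^i}$ and $\beta=-\sum_{i\ge1}Y^{p^i}$ in Proposition~\ref{prop:functional-Laguerre}: both generators of the ideal then vanish, $L_{p-1}^{(\alpha+\beta)}(X+Y)$ becomes $S(X+Y)$ because $\sum_{i\ge1}\bigl(X^{p^i}+Y^{p^i}\bigr)=\sum_{i\ge1}(X+Y)^{p^i}$, and one reads off that $S(X)S(Y)/S(X+Y)$ has only terms of degree divisible by $p$; the characterization of such series in~\cite{Mat:exponential} then yields $S(X)=E_p(X)G(X^p)$ up to the natural variations mentioned in the introduction.
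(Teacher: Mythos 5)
Your main argument is correct, and it takes a genuinely different route from the paper. The paper proves the proposition by specializing the two-variable functional congruence of Proposition~\ref{prop:functional-Laguerre} at $\alpha=-\sum_{i\ge 1}X^{p^i}$, $\beta=-\sum_{i\ge 1}Y^{p^i}$ (via a carefully constructed ring morphism killing the ideal $\mathcal{I}$), concluding that $S(X)S(Y)/S(X+Y)$ has all terms of degree divisible by $p$, and then invoking the characterization theorem of~\cite{Mat:exponential} to get $S(X)=E_p(cX)G(X^p)$, with $c=1$ pinned down by comparing with $E(X)$ modulo $X^p$. You instead work with a single variable: since $u=\sum_{i\ge1}X^{p^i}$ has $u'=0$ and $(-u)^p-(-u)=X^p$, specializing the differential equation~\eqref{eq:L-diff} at $\gamma=-u$ kills the inhomogeneous term and gives $XS'=(X+u)S$, while the logarithmic derivative of $E_p$ (an identity over $\Z_{(p)}$, hence modulo $p$) gives the same equation for $E_p$; the quotient $H=S/E_p$ then has $H'=0$, so $H=G(X^p)$ with $G(0)=1$. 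This is a legitimate and in fact more self-contained proof: it bypasses both Proposition~\ref{prop:functional-Laguerre} and the characterization theorem of~\cite{Mat:exponential}, and it yields $c=1$ automatically, at the modest cost of relying on the stated differential equation~\eqref{eq:L-diff} and the (correct) observation that $L_{p-1}^{(\alpha)}(X)$ lies in $\F_p[\alpha,X]$ so the substitution commutes with $d/dX$. What the paper's route buys, and what your argument does not by itself deliver, is the explicit functional congruence for $S$ showing that $S(X)S(Y)/S(X+Y)$ has only terms of degree divisible by $p$ --- which is the property actually needed to see Proposition~\ref{prop:functional-AH} as a special case of Proposition~\ref{prop:functional-Laguerre}; your closing variant, which is essentially the paper's proof, recovers exactly that.
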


Note that the power series $S(X)$ defined in Proposition~\ref{prop:S(X)} is the same as that
defined in~\cite[Section~3]{Mat:Artin-Hasse}.

\begin{proof}
Informally, we want to set
$\alpha=-\sum_{i=1}^{\infty}X^{p^i}$ and
$\beta=-\sum_{i=1}^{\infty}Y^{p^i}$
in Proposition~\ref{prop:functional-Laguerre}.
To put this on solid ground, start viewing the congruence stated in Proposition~\ref{prop:functional-Laguerre}
as an equality in the quotient ring $R[X,Y]/\mathcal{I}$,
where $\mathcal{I}$ is the ideal of the polynomial ring $R[X,Y]$ generated by
$X^p-(\alpha^p-\alpha)$
and
$Y^p-(\beta^p-\beta)$.
The ring $R$ may be viewed itself as the quotient ring of the polynomial ring
$\F_p[\alpha,\beta,\gamma]$ modulo the ideal generated by $\gamma\bigl((\alpha+\beta)^{p-1}-1\bigr)-1$.
The unique ring morphism of $\F_p[\alpha,\beta,\gamma]$ into the power series ring $\F_p[[X,Y]]$
which sends $\alpha$ to $-\sum_{i=1}^{\infty}X^{p^i}$,
$\beta$ to $-\sum_{i=1}^{\infty}Y^{p^i}$, and
$\gamma$ to $\bigl(-1+(\sum_{i=1}^{\infty}X^{p^i}+Y^{p^i}\bigr)^{p-1})^{-1}$,
induces a ring morphism of $R$ into $\F_p[[X,Y]]$.
In turn, this extends uniquely to a morphism of the polynomial ring $R[X,Y]$ into $\F_p[[X,Y]]$
sending $X$ to $X$ and $Y$ to $Y$.
Finally, this sends the ideal $\mathcal{I}$ to zero, and hence induces a morphism of
$R[X,Y]/\mathcal{I}$ to $\F[[X,Y]]$.
The result of applying this final morphism to the congruence stated in Proposition~\ref{prop:functional-Laguerre}
reads
\[
S(X)
S(Y)
=
S(X+Y)
\Bigl(
c'_0(X^p,Y^p)+\sum_{i=1}^{p-1}c'_i(X^p,Y^p)X^iY^{p-i}
\Bigr),
\]
where $c'_i(X^p,Y^p)=c_i\bigl(-\sum_{i=1}^{\infty}X^{p^i},-\sum_{j=1}^{\infty}Y^{p^j}\bigr)$.
Consequently, the series $S(X)\in 1+X\F_p[[X]]$ has the property that
$S(X)S(Y)/S(X+Y)$
has only terms of terms of degree divisible by $p$.
According to the Theorem in~\cite{Mat:exponential},
this is equivalent to $S(X)$ being of the form
$S(X)=E_p(cX)\cdot G(X^p)$,
for some $c\in\F_p$ and $G(X)\in 1+X\F_p[[X]]$.
Because $S(X)\equiv L^{(0)}_{p-1}(X)=E(X)\equiv E_p(X)\pmod{X^p}$,
the constant $c$ can only be $1$, and the desired conclusion follows.
\end{proof}

Proposition~\ref{prop:S(X)} and its proof explain how Proposition~\ref{prop:functional-AH}
may be viewed as a special case of Proposition~\ref{prop:functional-Laguerre}.
In a similar fashion as in the results of the previous section,
Proposition~\ref{prop:functional-Laguerre} allows one to prove a generalization
of Theorem~\ref{thm:AH-grading} where $D$ is not assumed to be nilpotent.
We state here only a special version of our result, where the assumption that all eigenvalues of $D$ belong to the prime field $\F_p$
produces a simpler statement.
This version is sufficient to perform an explicit instance of grading switching on an Albert-Zassenhaus algebra which we describe in Example~\ref{ex:AZ}.

\begin{theorem}[{a special case of~\cite[Theorem~4]{AviMat:Laguerre}}]\label{thm:Laguerre-grading}
Let $A=\bigoplus_k A_k$ be a non-associative algebra over the field $\F$ of characteristic $p>0$,
graded over the integers modulo $m$.
Suppose that $A$ has a graded derivation $D$ of degree $d$
such that  $D^{p^{r+1}}=D^{p^r}$, with $m\mid pd$.
Suppose that $\F$ contains the field of $p^p$ elements, and choose $\gamma \in \F$ with
$\gamma^p-\gamma=1$.
Let $A=\bigoplus_{a\in\F_p}A^{(a)}$ be the decomposition of $A$
into a direct sum of generalized eigenspaces for $D$.
Set $h(t)=\sum_{i=1}^{r-1}t^{p^i}$, and
let $\LL_D:A\to A$ be the linear map whose restriction to
$A^{(a)}$ coincides with $L_{p-1}^{(a\gamma-h(D))}(D)$.
Then
$A=\bigoplus_k\LL_D(A_k)$ is also a grading of $A$ over the integers modulo $m$.
\end{theorem}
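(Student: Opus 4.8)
The plan is to run the strategy of Theorems~\ref{thm:truncated-grading} and~\ref{thm:AH-grading} one more time, feeding the functional equation of Proposition~\ref{prop:functional-Laguerre} into the tensor product device and exploiting, as before, that the ``error factor'' $c_0+\sum_i c_iX^iY^{p-i}$ carries only monomials of $(X,Y)$-degree a multiple of $p$. First I would reconcile the given grading with the eigenspace decomposition of $D$. The polynomial identity $t^{p^{r+1}}-t^{p^r}=\prod_{c\in\F_p}(t-c)^{p^r}$ shows that $D$ is annihilated by $\prod_{c\in\F_p}(t-c)^{p^r}$, so all eigenvalues of $D$ lie in $\F_p$ and $(D-a)^{p^r}=0$ on $A^{(a)}$. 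Since $m\mid pd$, the operator $D^{p^r}$ has degree $p^rd\equiv 0\pmod m$ and hence preserves every $A_k$; as its eigenspaces are exactly the $A^{(a)}$, the two decompositions refine to $A=\bigoplus_{a,k}A^{(a)}_k$ with $A^{(a)}_k:=A^{(a)}\cap A_k$ and $D(A^{(a)}_k)\subseteq A^{(a)}_{k+d}$. For the same reason $h(D)=\sum_{i=1}^{r-1}D^{p^i}$, and therefore $\LL_D$ restricted to each $A^{(a)}$, is built from operators that either preserve the grading or shift it by a multiple of $d$.

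The heart of the argument is the operator identity $D^p=\alpha^p-\alpha$ on $A^{(a)}$, where $\alpha:=a\gamma-h(D)$. Writing $D=a+N$ with $N^{p^r}=0$ on $A^{(a)}$ and $M:=\sum_{i=1}^{r-1}N^{p^i}$, one gets $h(D)=(r-1)a+M$, so $\alpha=a(\gamma-r+1)-M$; Frobenius and $\gamma^p=\gamma+1$ give $\alpha^p-\alpha=a+(M-M^p)$, and the telescoping $M-M^p=N^p$ (here $N^{p^r}=0$ enters) matches $D^p=a+N^p$. This is what pins down the shape of $h$ and the choice of $\gamma$, and it is inherited by $D_1:=D\otimes\Id$ and $D_2:=\Id\otimes D$ on $A^{(a)}\otimes A^{(b)}$, giving $D_1^p=\alpha_a^p-\alpha_a$ and $D_2^p=\beta_b^p-\beta_b$ with $\alpha_a:=a\gamma-h(D_1)$, $\beta_b:=b\gamma-h(D_2)$. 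The same computation yields $\beta^p-\beta=a$ for the scalar $\beta:=a(\gamma-r+1)$, so for $a\neq 0$ the polynomial $t^p-t-a$ is irreducible over $\F_p$ and $\beta$ has degree $p$ over $\F_p$; it can therefore not be a root of $L_{p-1}^{(t)}(a)\in\F_p[t]$, which has degree $p-1<p$ (leading coefficient $1/(p-1)!=-1$). Since $\LL_D$ restricted to $A^{(a)}$ equals $L_{p-1}^{(\beta)}(a)\cdot\Id$ plus a nilpotent operator (and $L_{p-1}^{(0)}(0)=1$ settles $a=0$), the map $\LL_D$ is bijective; this is the point where $\F_{p^p}\subseteq\F$ is genuinely used.

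Now I would apply the substitution $\alpha\mapsto\alpha_a$, $\beta\mapsto\beta_b$, $X\mapsto D_1$, $Y\mapsto D_2$ to Proposition~\ref{prop:functional-Laguerre}: the operators commute, the relations $X^p-(\alpha^p-\alpha)$ and $Y^p-(\beta^p-\beta)$ map to zero by the previous step, and $(\alpha_a+\beta_b)^{p-1}-1$ is invertible on $A^{(a)}\otimes A^{(b)}$ because its semisimple part $\bigl((a+b)(\gamma-r+1)\bigr)^{p-1}-1$ is nonzero (as $(a+b)(\gamma-r+1)\notin\F_p^\times$), so the congruence becomes an operator identity there. Writing $L_{p-1}^{(\alpha_a+\beta_b)}(D_1+D_2)=P(D_1+D_2)$ with $P(t)=L_{p-1}^{((a+b)\gamma-h(t))}(t)$ (using $h(D_1)+h(D_2)=h(D_1+D_2)$), the tensor product device $P(D)\circ m=m\circ P(D_1+D_2)$ identifies $m\circ L_{p-1}^{(\alpha_a+\beta_b)}(D_1+D_2)$ with $\LL_D\circ m$ as maps $A^{(a)}\otimes A^{(b)}\to A^{(a+b)}$ (recall $A^{(a)}A^{(b)}\subseteq A^{(a+b)}$). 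Composing the identity with $m$ then gives, for $x\in A^{(a)}$ and $y\in A^{(b)}$,
\[
\LL_D(x)\cdot\LL_D(y)=\LL_D\bigl(m(u)\bigr),\qquad
u=\Bigl(c_0(\alpha_a,\beta_b)+\sum_{i=1}^{p-1}c_i(\alpha_a,\beta_b)\,D_1^iD_2^{p-i}\Bigr)(x\otimes y).
\]
Taking $x\in A^{(a)}_k$ and $y\in A^{(b)}_l$, each $c_i(\alpha_a,\beta_b)$ is a polynomial in $\alpha_a$, $\beta_b$ and $\bigl((\alpha_a+\beta_b)^{p-1}-1\bigr)^{-1}$, hence in the operators $D_1^{p^j},D_2^{p^j}$ ($j\ge 1$) and their inverses, all of bidegree $(0,0)$ modulo $m$, while $D_1^iD_2^{p-i}$ shifts total degree by $pd\equiv 0$. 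Hence $u\in\bigoplus_i A^{(a)}_{k+id}\otimes A^{(b)}_{l+(p-i)d}$ and $m(u)\in A^{(a+b)}_{k+l}\subseteq A_{k+l}$, so $\LL_D(A_k)\cdot\LL_D(A_l)\subseteq\LL_D(A_{k+l})$; together with the bijectivity of $\LL_D$ this exhibits $\bigoplus_k\LL_D(A_k)$ as a grading over $\Z/m\Z$.

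I expect the real work to be concentrated in the middle step: establishing the exact identity $D^p=\alpha^p-\alpha$, which forces the precise definitions of $h$ and $\gamma$ and explains the role of the stabilization exponent $r$, and then exploiting the Artin--Schreier relation $\beta^p-\beta=a$ to exclude $\LL_D$ being singular. Once these are secured, the remainder is a careful but routine propagation of bidegrees through the tensor product device, exactly as in the proofs of the earlier theorems.
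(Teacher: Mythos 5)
Your proposal is correct and follows essentially the route the paper indicates for this theorem: feed Proposition~\ref{prop:functional-Laguerre} into the tensor product device, using the operator identity $D^p=\alpha^p-\alpha$ on each generalized eigenspace (which is exactly what the choices of $h$ and $\gamma$ are designed to ensure) to kill the ideal relations, checking invertibility of $(\alpha_a+\beta_b)^{p-1}-1$ and of $\LL_D$ via the Artin--Schreier element $a(\gamma-r+1)$, and then tracking that the error factor only shifts degrees by multiples of $pd\equiv 0\pmod m$. This is the same argument, in the same spirit as Theorems~\ref{thm:truncated-grading} and~\ref{thm:AH-grading}, that the paper attributes to~\cite[Theorem~4]{AviMat:Laguerre}.
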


Note that on the subalgebra $A^{(0)}$ of $A$ our map $\LL_D$ coincides with $S(D)$,
where $S(X)$ is as defined in Proposition~\ref{prop:S(X)}.
Consequently, the grading $A^{(0)}=\bigoplus_k\LL_D(A_k\cap A^{(0)})$
of that subalgebra is the same as the one we would obtain by an application to $A^{(0)}$ of Theorem~\ref{thm:AH-grading},
which is based on $E_p(D)$.
(The possible extra factor $G(X^p)$ of $S(X)$ which appears in Theorem~\ref{prop:S(X)} is immaterial here,
because $D^p$ is a derivation of degree zero, and hence $G(D^p)$ does not affect the grading.)

We refer the reader to~\cite[Theorem~4]{AviMat:Laguerre} for a generalization of Theorem~\ref{thm:Laguerre-grading}
which, up to possibly extending the field $\F$,
only assumes that some power $D^{p^r}$ is semisimple with finitely many
eigenvalues.
The map $\LL_D$ of Theorem~\ref{thm:Laguerre-grading} extends to that more general situation.
When specialized to the toral switching setting of~\cite[Section~1.5]{Strade:book},
which we recalled after Theorem~\ref{thm:AH-grading},
our map $\LL_{\ad x}$ coincides with the map $E(x,\lambda)$
which connects root spaces of the old and new torus.
In fact, our construction of the map $\LL_D$ was strongly inspired by toral switching.
However, as was the case for the grading switching by means of Artin-Hasse exponentials,
our more general grading switching based on Laguerre polynomials can be used to obtain gradings
which are not achievable by toral switching,
as they are over groups with elements of order $p^2$.
We illustrate this assertion with Example~\ref{ex:AZ}.

\section{Applications}\label{sec:applications}

The {\em grading switching} described in our increasingly more general
Theorems~\ref{thm:truncated-grading}, \ref{thm:AH-grading}
and~\ref{thm:Laguerre-grading} was initially motivated by
the need to produce certain gradings in the rather specialized
area of {\em thin Lie algebras},
but is capable of applications of more general interest.
In this section we review one from~\cite{Mat:Artin-Hasse}
which concerns cyclic gradings of Zassenhaus algebras,
and then present a new application to cyclic gradings
of Albert-Zassenhaus algebras.
Some {\em ad hoc} terminology was used in the former example in~\cite{Mat:Artin-Hasse},
where the cyclic gradings of a Zassenhaus algebra produced there
were said to be {\em genuinely} over some cyclic group $\Z/p^r\Z$,
meaning that they cannot be obtained from any $(\Z/m\Z)$-grading
with $m=0$ or a power of $p$ greater than $p^r$ by viewing the degrees modulo $p^r$.
We will recast that statement using more general
definitions pertaining to gradings, as in~\cite{Kochetov:gradings,EldKoc:book},
which we recall only in the amount of generality that we actually need.

A $G$-grading $\Gamma$ of a non-associative algebra $A$, where $G$ is an abelian group,
is a vector space decomposition $\Gamma:A=\bigoplus_{g\in G}A_g$,
such that $A_{g_1}A_{g_2}\subseteq A_{g_1+g_2}$ for each $g_1,g_2\in G$.
The {\em support} of the grading is the subset $S=\{g\in G\mid A_g\neq 0\}$.
It is clearly not restrictive to assume that $S$ generates $G$, and so we do that in the sequel.
Our reason for writing the operation in $G$ in additive notation
is that any group grading of a simple Lie algebra, the case of interest here,
is actually an abelian group grading, see~\cite[Proposition~3.3]{Kochetov:gradings}.

A $G$-grading $\Gamma:A=\bigoplus_{g\in G}A_s$
is said to be a {\em refinement} of the $G'$-grading $\Gamma':A=\bigoplus_{g'\in G'}A'_{g'}$
(and $\Gamma'$ is a {\em coarsening} of $\Gamma$)
if each $A_g$ is contained in some $A'_{g'}$.
The refinement is {\em proper} if this containment is strict in at least one case.
We will say that a grading is {\em fine} it if does not admit any proper refinement
(in the class of group gradings, as considered here).

A more general definition of a grading $\Gamma$ is a vector space decomposition $\Gamma:A=\bigoplus_{s\in S}A_s$,
with $A_s\neq 0$ for all $s\in S$,
where $S$ is just a set, such that for each $s_1,s_2\in S$ we have
$A_{s_1}A_{s_2}\subseteq A_{s_3}$ for some $s_3\in S$.
Not every grading $\Gamma$ in this generality may be {\em realized} as group grading,
which means embedding the $S$ as a subset of a group $G$ in a way to turn the grading
into a $G$-grading, but if it does then there exists a {\em universal group} for $\Gamma$.
We will bypass some details of the usual definition by using an equivalent definition which is sufficient for our purposes.
(The equivalence follows from~\cite[Proposition~3.15]{Kochetov:gradings}.)

For a $G$-grading $\Gamma:A=\bigoplus_{g\in G}A_g$,
we say that $G$ is the {\em universal (grading) group} for that grading if the following holds:
given any group grading $\Gamma':A=\bigoplus_{h\in H}A'_h$, for some abelian group $H$,
which is a coarsening of $\Gamma$,
there exists a unique group homomorphism
$f:G\to H$ such that $A_g\subseteq A'_{f(g)}$, for all $g\in G$
(whence $A'_h=\bigoplus_{g\in f^{-1}(h)}A_g$).
Note that the restriction of $f$ to the support of $\Gamma$ is uniquely determined by this requirement.

Two gradings $\Gamma:A=\bigoplus_{s\in S}A_s$ and $\Gamma':A=\bigoplus_{s'\in S'}A'_{s'}$ are said to be {\em equivalent}
if there exist an algebra automorphism $\varphi:A\to A$
and a bijection $\alpha:S\to S'$ such that
$\varphi(A_s)=A'_{\alpha(s)}$
for all $s\in S$.
According to~\cite[Proposition~3.7]{Kochetov:gradings},
when those are group gradings over their universal groups,
it is not restrictive to require that $\alpha$ is a group homomorphism.

As a simple illustration of the above concepts, consider the Zassenhaus algebra $W(1;n)$,
whose definition we recall now.
The algebra $\F[x;n]$ of \emph{divided powers} in one indeterminate $x$ of height $n$,
over a field $\F$ of prime characteristic $p$,
is the associative $\F$-algebra with basis elements $x^{(i)}$, for $0\leq i<p^{n}$,
and multiplication defined by
$x^{(i)}\cdot x^{(j)}=\binom{i+j}{i} x^{(i+j)}$.
The Zassenhaus algebra $W(1;n)$ can be defined as the Lie subalgebra of $\Der(\F[x;n])$
consisting of the {\em special} derivations $f\partial$, with $f\in\F[x;n]$.

We prove that $\Z$ is the universal group for the (standard) $\Z$-grading
$W(1;n)=\bigoplus_{i\in\Z}A_i$, where $A_i$ is spanned by $x^{(i+1)}\partial$ for $-1\le i<p^n-1$,
and is zero otherwise.
Consider a group grading $A=\bigoplus_{h\in H}A'_h$, for some abelian group $H$,
such that each $A_i$ is contained in some $A'_h$.
For $-1\le i<p^n-1$, let $f(i)$ be the unique element of $H$ such that $A_i\subseteq A'_{f(i)}$.
Because $[\partial,x^{(i+1)}\partial]=x^{(i)}\partial$
we have $0\neq A'_{f(i-1)}\subseteq A'_{f(-1)+f(i)}$, whence
$f(i-1)=f(-1)+f(i)$ for $-1<i<p^n-1$.
It follows inductively that $f(i)=-i\cdot f(-1)$ for $-1\le i<p^n-1$.
Hence this partially defined function $f$ extends in a unique way to a homomorphism $f:\Z\to H$, by setting
$f(i)=-i\cdot f(-1)$ for all $i\in\Z$, and this extension clearly satisfies the required property
$A'_h=\bigoplus_{i\in f^{-1}(h)}A_i$.

We take a less trivial example from~\cite[Theorem~2]{Mat:Artin-Hasse}, where a grading of $W(1;n)$
over $\Z/p^{s+1}\Z$ was produced, with $0\le s<n$, for which $\Z/p^{s+1}\Z$ is the universal group.
That construction involves a grading switching in the special form of Theorem~\ref{thm:AH-grading}.

\begin{example}[{\cite[Example~5.3 and Theorem~2]{Mat:Artin-Hasse}}]\label{ex:Zassenhaus}
Consider a Zassenhaus algebra $W(1;n)$ in characteristic $p>3$, and let $0\le s<n$.
Then the $p^s$-th power $D=(\ad\partial)^{p^s}$
of the inner derivation $\ad\partial$ of $W(1;n)$ is graded of degree $-p^s$ and satisfies $D^{p^{n-s}}=0$.
We are in a position to apply Theorem~\ref{thm:AH-grading}
to the $(\Z/p^{s+1}\Z)$-grading of $W(1;n)$ obtained from the $\Z$-grading by viewing the degrees modulo $p^{s+1}$.
Thus, $E_p(D)$ sends this grading into another $(\Z/p^{s+1}\Z)$-grading of $W(1;n)$.
We now prove that $\Z/p^{s+1}\Z$ is the universal group for this grading.

We start with with proving the conclusion in the special case where $n=s+1$.
Thus, our grading is
$\Gamma:W(1;s+1)=\bigoplus_{i\in\Z/p^{s+1}\Z}A_i$, where $A_i$ is spanned by $E_p(D)(x^{(i+1)}\partial)$ for $-1\le i<p^{s+1}-1$.
(Note that to avoid cumbersome notation we are using the same letter $i$ both for an integer in the range considered
and for its residue class modulo $p^{s+1}$.)
Consider a group grading $\Gamma':W(1;s+1)=\bigoplus_{h\in H}A'_h$, for some abelian group $H$,
such that each $A_i$ is contained in some $A'_h$.
For $-1\le i<p^{s+1}-1$, let $f(i)$ be the unique element of $H$ such that $A_i\subseteq A'_{f(i)}$.
As shown in~\cite[Example~5.3]{Mat:Artin-Hasse} we have
\[
[E_p(D)\partial,E_p(D)(x^{(i+1)}\partial)]=
\begin{cases}
\bigl((i+1)/p^s\bigr)\cdot E_p(D)(x^{(i)}\partial)
&\text{if $p^s\mid i+1$,}
\\
E_p(D)(x^{(i)}\partial)
&\text{otherwise.}
\end{cases}
\]
This yields $0\neq A'_{f(i-1)}\subseteq A'_{f(-1)+f(i)}$, whence
$f(i-1)=f(-1)+f(i)$ for $-1<i<p^{s+1}-1$,
and it follows inductively that $f(i)=-i\cdot f(-1)$ for $-1\le i<p^{s+1}-1$.
Moreover,
$[E_p(D)(x^{(2p^s)}\partial),E_p(D)(x^{((p-1)p^s)}\partial)]=(p-3)E_p(D)(x^{(p^s-1)}\partial)$
yields
$f(p^s-2)=f(2p^s-1)+f((p-1)p^s-1)$,
because $p>3$,
whence
$p^{s+1}\cdot f(-1)=0$.
Hence $f$, which is clearly the unique function satisfying the required property
$A'_h=\bigoplus_{i\in f^{-1}(h)}A_i$,
is a group homomorphism $f:\Z/p^{s+1}\Z\to H$,
as desired.
Note that the grading $\Gamma$ is a fine grading of $W(1;s+1)$, because
all its homogeneous components are one-dimensional.
In particular, it is not equivalent to the standard $\Z$-grading, because
the universal grading group is different in the two cases.

Now we deal with the general case $n\ge s+1$.
Let
$W(1;n)=\bigoplus_{i\in\Z/p^{s+1}\Z}A_i$ be the grading produced by the grading switching as described earlier.
Note that the subalgebra $W(1;s+1)$ of $W(1;n)$, being normalized by $D$, is also
a graded subalgebra with respect to this grading.
In fact, $W(1;s+1)=\bigoplus_{i\in\Z/p^{s+1}\Z}A''_i$, where $A''_i=A_i\cap W(1;s+1)$
coincides with what we denoted by $A_i$ in the previous paragraph.
Let $W(1;n)=\bigoplus_{h\in H}A'_h$ be another group grading, for some abelian group $H$,
such that each $A_i$ is contained in some $A'_h$.
Now $W(1;s+1)$ is a graded subalgebra of $W(1;n)$ with respect to its $H$-grading as well, because
$\sum_{h\in H}\left(A'_h\cap W(1;s+1)\right)=
\sum_{h\in H}\left(A'_h\cap\sum_{i}A''_i\right)=
\sum_{h\in H}\sum_i\left(A'_h\cap A''_i\right)=
\sum_i\left(A''_i\cap\sum_{h\in H}A'_h\right)=
\sum_iA''_i=W(1;s+1)$.
According to the definition of universal group,
there is a unique homomorphism $f:\Z/p^{s+1}\Z\to H$ such that
$A_i\cap W(1;s+1)\subseteq A'_{f(i)}\cap W(1;s+1)$ for all $i\in\Z/p^{s+1}\Z$.
Because $A_i\cap W(1;s+1)\neq 0$ for all $i$, and each $A_i$ is contained in a unique $A'_h$
it follows that $A_i\subseteq A'_{f(i)}$ for all $i$, as desired.

As explained in~\cite[Remark~5.5]{Mat:Artin-Hasse},
the grading switching performed in this example is not attainable by toral switching,
except when $n=1$.
In that case $W(1;n)$ is restricted, and the
$(\Z/p\Z)$-grading obtained from the standard grading by reducing the degrees modulo $p$ coincides with the
root space decomposition with respect to the torus spanned by $x\partial$.
Toral switching with respect to the element $\partial$ of $W(1;1)$
produces the torus spanned by $E_p(D)(x\partial)=(1+x)\partial$, and the corresponding
root spaces can be obtained by applying $E_p(D)$ to the original root spaces.
\end{example}

Our second example plays a role in~\cite{AviMat:mixed_types}, where a variation of a grading we describe here
can be realized as a {\em thin grading} of an Albert-Zassenhaus algebra.
In this example the derivation involved in the grading switching is not nilpotent,
and so the grading switching of Theorem~\ref{thm:AH-grading} based on the Artin-Hasse series is not sufficient,
but we need the more general version with Laguerre polynomials.
Our Theorem~\ref{thm:Laguerre-grading} will be general enough because the derivation involved
has all its eigenvalues in the prime field.

\begin{example}\label{ex:AZ}
The algebra
$\F[x,y;n,m]$ of divided powers in two
indeterminates $x$ and $y$ of heights $n$ and $m$, over a field $\F$ of prime characteristic $p$,
may be identified with the tensor product algebra
$\F[x;n]\otimes\F[y;m]$.
Thus, a basis is given by the monomials
$x^{(i)}y^{(j)}$, for
$0\leq i<p^{n_1}$ and $0\leq j<p^{n_2}$,
which multiply according to the rule
$x^{(i)} y^{(j)}x^{(k)}y^{(l)}=\binom{i+k}{i}\binom{j+l}{j} x^{(i+k)}y^{(j+l)}$.
We use the
standard shorthands $\bar{x}=x^{(p^n-1)}$ and
$\bar{y}=y^{(p^m-1)}$.
The Albert-Zassenhaus algebra
$H(2;(n,m);\Phi(1))$, which is simple if $p>2$, can be identified with the vector space
$\F[x,y;n,m]$ endowed with the Lie bracket
(a {\em Poisson bracket})
\begin{align}
&\{x^{(i)}
y^{(j)},
x^{(k)}
y^{(l)}\}
=
N(i,j,k,l)\,
x^{(i+k-1)}
y^{(j+l-1)}
\quad\text{if $i+k>0$, and}\label{eq:Poisson_1}
\\
&\{y^{(j)},
y^{(l)}\}
=
\left(
\binom{j+l-1}{l}-
\binom{j+l-1}{j}
\right)\bar x
y^{(j+l-1)},\label{eq:Poisson_exception}
\end{align}
where we have set
\[
N(i,j,k,l):=
\binom{i+k-1}{i}
\binom{j+l-1}{j-1}-
\binom{i+k-1}{i-1}
\binom{j+l-1}{j}.
\]
See~\cite[Chapter~6]{Strade:book} and~\cite{AviMat:A-Z} for this and further details.
Equations~\eqref{eq:Poisson_1} and~\eqref{eq:Poisson_exception} show that
$H(2;(n,m);\Phi(1))$
is graded over the group $\Z/p^n\Z\times\Z$
by assigning degree $(i+p^n\Z,j)$ to the monomial $x^{(i+1)}y^{(j+1)}$.
It is not difficult to see that $\Z/p^n\Z\times\Z$ is the universal group for this grading.

Assuming $p>3$, consider the Albert-Zassenhaus algebra $A=H(2;(n,m);\Phi(1))$,
for some $n,m>0$, and its derivation $D=(\ad y)^{p^s}$, for some $0\le s<n$.
Writing each monomial in $\F[x,y;n,m]$
in the form  $x^{(ap^s)} x^{(k+1)}y^{(j+1)}$,
with $0\le a<p^{n-s}$, $-1\leq k<p^s-1$ and $-1\leq j<p^m-1$,
we have
\[
D(x^{(ap^s)} x^{(k+1)}y^{(j+1)})=
\begin{cases}
x^{((a-1)p^s)}x^{(k+1)}y^{(j+1)}& \textrm{if $a>0$,}\\
-jx^{(p^{n}-p^s)}x^{(k+1)}y^{(j+1)}& \textrm{if $a=0$.}
\end{cases}
\]
Consequently, $D^{p^{n-s}}$ acts semisimply on $A$, with eigenvalues in the prime field, as
\[
D^{p^{n-s}}(x^{(ap^s)} x^{(k+1)}y^{(j+1)})=-jx^{(ap^s)}
x^{(k+1)}y^{(j+1)},
\]
and hence $D^{p^{n-s+1}}=D^{p^{n-s}}$.
In particular, $D$ has all its eigenvalues in the prime field.

Consider the cyclic grading of $A=\bigoplus A^{x}_{ap^s+k}$ over the group $\Z/p^{s+1}\Z$,
obtained by assigning degree $ap^s+k+p^{s+1}\Z$ to the
monomial $x^{(ap^s)}x^{(k+1)}y^{(j+1)}$
(and the superscript $x$ in the homogeneous components reminds us of this choice of a degree).
By assigning that monomial degree $j$ instead, we obtain a grading $\Z$-grading $A=\bigoplus A^y_j$,
whose zero-component $A^y_0$ is isomorphic with the Zassenhaus algebra $W(1;n)$.
These two gradings together determine a grading $A=\bigoplus(A^x_{ap^s+k}\cap A^y_j)$
over $\Z/p^{s+1}\Z \times \Z$.
Now $D$ is a homogeneous derivation of degree $d=-p^s+p^{s+1}\Z$ with respect to the former grading,
whence $pd$ vanishes in the grading group.
We apply the grading switching from~\cite[Theorem~4]{AviMat:Laguerre} to this grading,
in the special form recalled here as Theorem~\ref{thm:Laguerre-grading},
and obtain another cyclic grading of $A$, namely,
$A=\bigoplus\mathcal{L}_D(A^x_{ap^s+k})$.
Now intersecting this grading
with $A=\bigoplus A^y_j$ yields a grading
$A=\bigoplus_{(i,j)\in\Z/p^{s+1}\Z\times\Z}A_{(i,j)}$,
where
$A_{(ap^s+k,j)}
=\mathcal{L}_D(A^x_{ap^s+k}\cap A^y_j)
=\mathcal{L}_D(A^x_{ap^s+k})\cap A^y_j
$.
Now we prove that $\Z/p^{s+1}\Z\times\Z$ is the universal group for this grading,
using Example~\ref{ex:Zassenhaus}.

Consider a group grading $A=\bigoplus_{h\in H}A'_h$, over some abelian group $H$,
such that each $A_{(i,j)}$ is contained in some $A'_h$.
For $i\in\Z/p^{s+1}\Z$ and $-1\le j<p^m-1$, let $f(i,j)$ be the unique element of $H$ such that $A_{(i,j)}\subseteq A'_{f(i,j)}$.
Note that the centralizer of the element $\mathcal{L}_D(x)$ of $A_{(0,-1)}$ is contained in $A^y_{-1}$ (and actually equals $A^y_{-1}$),
and hence $[\mathcal{L}_D(x),A_{(i,j)}]\neq 0$ provided $j\ge 0$.
As a consequence, for each $i\in\Z/p^{s+1}\Z$ we have $0\neq A'_{f(i,j-1)}\subseteq A'_{f(i,-1)+f(i,j)}$, whence
$f(i,j-1)=f(i,0)+f(i,j)$ for $-1<j<p^m-1$.
It follows inductively that $f(i,j)=f(i,0)-j\cdot f(i,-1)$ for $-1\le j<p^m-1$, and this is of course for each $i\in\Z/p^{s+1}\Z$.

Now $\sum_{i\in\Z/p^{s+1}\Z}A_{(i,0)}$ coincides with $A^y_0$, which is isomorphic with the Zassenhaus algebra $W(1;m)$ as noted earlier.
Furthermore, its grading $\sum_{i\in\Z/p^{s+1}\Z}A_{(i,0)}$ coincides
with the grading over $\Z/p^{s+1}\Z$ of the Zassenhaus algebra considered in Example~\ref{ex:Zassenhaus},
for which we have proved that $\Z/p^{s+1}\Z$ is a universal group.
Also, $A^y_0$ is a graded subalgebra of $A$ with respect to its $H$-grading as well, because
$\sum_{h\in H}\left(A'_h\cap\sum_{i}A_{(i,0)}\right)=
\sum_{h\in H}\sum_{i}\left(A'_h\cap A_{(i,0)}\right)=
\sum_{i}\left(A_{(i,0)}\cap\sum_{h\in H}A'_h\right)$.
According to the definition of universal group,
there is a unique homomorphism $\bar f:\Z/p^{s+1}\Z\times\{0\}\to H$ such that
$A'_h\cap Z=\bigoplus_{i\in \bar f^{-1}(h)}A_{(i,0)}$ for all $h\in H$.
However, $\bar f$ must coincide with the restriction of $f$ to $\Z/p^{s+1}\Z\times\{0\}$,
because the support of $\bar f$ is this whole group.

The previous paragraph shows that the map $i\mapsto f(i,0)$
is a group homomorphism from $\Z/p^{s+1}\Z$ to $H$.
Together with the previous arguments,
it follows that extending $f$ to a function from $\Z/p^{s+1}\Z\times\Z$ to $H$ by setting
$f(i,j)=f(i,0)-j\cdot f(i,-1)$ for $i\in\Z/p^{s+1}\Z$ and for all $j\in\Z$ results in a group homomorphism, clearly unique.
It clearly satisfies the required property
$A'_h=\bigoplus_{(i,j)\in f^{-1}(h)}A_{(i,j)}$.
We conclude that $\Z/p^{s+1}\Z\times\Z$ is the universal group for the grading under consideration, as claimed.

Now assume $s+1=n$.
Then our $(\Z/p^n\Z\times\Z)$-grading
$A=\bigoplus_{(i,j)\in\Z/p^n\Z\times\Z}A_{(i,j)}$
is fine, simply because all components are one-dimensional.
We prove that it is not equivalent
to the original $(\Z/p^n\Z\times\Z)$-grading
$A=\bigoplus_{(i,j)\in\Z/p^n\Z\times\Z}A'_{(i,j)}$,
where $A'_{(i,j)}$ is spanned by $x^{(i+1)}y^{(j+1)}$.
Because both gradings are over their universal groups,
if they were equivalent there would be a Lie algebra automorphism $\varphi:A\to A$
and an automorphism $\alpha$ of $\Z/p^n\Z\times\Z$ such that
$\varphi(A_{(i,j)})=A'_{\varphi((i,j))}$
for all $(i,j)\in \Z/p^n\Z\times\Z$.
But then to the the restriction of $\alpha$
to the torsion subgroup
$\Z/p^{s+1}\Z\times\{0\}$
of $\Z/p^n\Z\times\Z$
there would correspond a Lie algebra automorphism of
$\bigoplus_{i\in\Z/p^n\Z}A_{(i,0)}\cong W(1;n)$
sending each $A_{(i,0)}$ to the corresponding $A'_{(i,0)}$.
This is not possible, because these are two non-equivalent gradings of a Zassenhaus algebra,
as pointed out at the end of Example~\ref{ex:Zassenhaus}.

Along the lines of~\cite[Remark~5.5]{Mat:Artin-Hasse}, which pointed out the corresponding fact
for the gradings of the Zassenhaus algebra in~Example~\ref{ex:Zassenhaus},
one can show that the gradings of the Albert-Zassenhaus algebra produced here by grading switching
could not be obtained by toral switching, with the exception of the case $n=1$.
In that case, the $p$-envelope
$A_{[p]}=\Der(A)$
of $A=H(2;(n,m);\Phi(1))$ (see~\cite[Section~7.1]{Strade:book})
has a two-dimensional maximal torus $T$ spanned by $xy$
and the derivation $(\ad y)^p$.
The intersection with $A$ of the corresponding root space decomposition of $A_{[p]}$
may be obtained from the $\Z/p\Z\times\Z$-grading of $A$
introduced at the beginning of this example by reducing all degrees modulo $p$.
Toral switching with respect to the root vector $y$
produces a new torus $\mathcal{L}_{\ad y}(T)$, and the corresponding
root spaces can be obtained by applying $\mathcal{L}_{\ad y}$ to the original root spaces.
Hence in our notation each of the new root spaces (on $A$) has the form
$\bigoplus_{k=0}^{p^{m-1}}A_{(i,j+kp)}$,
for some $-1\le i,j<p-1$.
Thus, in this case, but in this case only, our grading switching can be (essentially)
obtained by traditional toral switching.
\end{example}

\bibliographystyle{amsplain}

\bibliography{References}

\def\cprime{$'$} \def\polhk#1{\setbox0=\hbox{#1}{\ooalign{\hidewidth
  \lower1.5ex\hbox{`}\hidewidth\crcr\unhbox0}}}
\providecommand{\bysame}{\leavevmode\hbox to3em{\hrulefill}\thinspace}
\providecommand{\MR}{\relax\ifhmode\unskip\space\fi MR }
% \MRhref is called by the amsart/book/proc definition of \MR.
\providecommand{\MRhref}[2]{%
  \href{http://www.ams.org/mathscinet-getitem?mr=#1}{#2}
}
\providecommand{\href}[2]{#2}
\begin{thebibliography}{10}

\bibitem{AviMat:Laguerre}
M.~Avitabile and S.~Mattarei, \emph{{L}aguerre polynomials of derivations},
  {\sf arXiv:1211.4432}, to appear in Israel J.~Math.

\bibitem{AviMat:mixed_types}
\bysame, \emph{Nottingham {L}ie algebras with diamonds of finite and infinite
  type}, {\sf arXiv:1211.4436}, to appear in J.~Lie Theory.

\bibitem{AviMat:-1}
\bysame, \emph{Thin {L}ie algebras with diamonds of finite and infinite type},
  J. Algebra \textbf{293} (2005), no.~1, 34--64. \MR{2173965 (2006f:17018)}

\bibitem{AviMat:A-Z}
\bysame, \emph{Thin loop algebras of {A}lbert-{Z}assenhaus algebras}, J.
  Algebra \textbf{315} (2007), no.~2, 824--851. \MR{2351896 (2008h:17022)}

\bibitem{BlWil:rank-two}
Richard~E. Block and Robert~Lee Wilson, \emph{The simple {L}ie {$p$}-algebras
  of rank two}, Ann. of Math. (2) \textbf{115} (1982), no.~1, 93--168.
  \MR{644017 (83j:17008)}

\bibitem{EldKoc:book}
Alberto Elduque and Mikhail Kochetov, \emph{Gradings on simple {L}ie algebras},
  Mathematical Surveys and Monographs, vol. 189, American Mathematical Society,
  Providence, RI, 2013. \MR{3087174}

\bibitem{Koblitz}
Neal Koblitz, \emph{{$p$}-adic numbers, {$p$}-adic analysis, and
  zeta-functions}, second ed., Graduate Texts in Mathematics, vol.~58,
  Springer-Verlag, New York, 1984. \MR{754003 (86c:11086)}

\bibitem{Kochetov:gradings}
Mikhail Kochetov, \emph{Gradings on finite-dimensional simple {L}ie algebras},
  Acta Appl. Math. \textbf{108} (2009), no.~1, 101--127. \MR{2540960
  (2010i:17044)}

\bibitem{Mat:Artin-Hasse}
S.~Mattarei, \emph{Artin-{H}asse exponentials of derivations}, J. Algebra
  \textbf{294} (2005), no.~1, 1--18. \MR{2171626}

\bibitem{Mat:exponential}
\bysame, \emph{Exponential functions in prime characteristic}, Aequationes
  Math. \textbf{71} (2006), no.~3, 311--317. \MR{2236408 (2007b:39056)}

\bibitem{Premet:Cartan}
A.~A. Premet, \emph{Cartan subalgebras of {L}ie {$p$}-algebras}, Izv. Akad.
  Nauk SSSR Ser. Mat. \textbf{50} (1986), no.~4, 788--800, 878--879.
  \MR{88d:17012}

\bibitem{Robert}
Alain~M. Robert, \emph{A course in {$p$}-adic analysis}, Graduate Texts in
  Mathematics, vol. 198, Springer-Verlag, New York, 2000. \MR{1760253
  (2001g:11182)}

\bibitem{Strade:book}
Helmut Strade, \emph{Simple {L}ie algebras over fields of positive
  characteristic. {I.} {S}tructure theory}, de Gruyter Expositions in
  Mathematics, vol.~38, Walter de Gruyter \& Co., Berlin, 2004. \MR{2059133
  (2005c:17025)}

\bibitem{Win:toral}
David~J. Winter, \emph{On the toral structure of {L}ie {$p$}-algebras}, Acta
  Math. \textbf{123} (1969), 69--81. \MR{0251095 (40 \#4326)}

\end{thebibliography}

\end{document}